\documentclass[11pt]{amsart}

\usepackage{kotex}
\usepackage{amsmath, amssymb, amsthm, mathtools}
\usepackage{enumitem}
\usepackage{geometry}
\usepackage{float}
\usepackage[hidelinks]{hyperref}

\usepackage{graphicx}
\usepackage{tikz}
\usetikzlibrary{arrows.meta, positioning, shapes.geometric, calc}

\theoremstyle{plain}
\newtheorem{theorem}{Theorem}[section]
\newtheorem{lemma}[theorem]{Lemma}
\newtheorem{proposition}[theorem]{Proposition}
\newtheorem{corollary}[theorem]{Corollary}

\theoremstyle{definition}
\newtheorem{definition}[theorem]{Definition}
\newtheorem{example}[theorem]{Example}
\newtheorem{question}{Question}[section]

\theoremstyle{remark}
\newtheorem{remark}[theorem]{Remark}

\newcommand{\Sp}{\mathcal{S}_P}
\newcommand{\Dp}{\mathcal{D}_P}
\newcommand{\F}{\mathcal{F}}
\newcommand{\N}{\mathbb{N}}
\newcommand{\R}{\mathbb{R}}
\newcommand{\e}{\epsilon}
\DeclareMathOperator{\Range}{Range}
\newcommand{\cl}[1]{\overline{#1}}

\title[Minimal collections of sequences]{On minimal collections of sequences for testing continuity}
\author{Gyuhyun Lim}

\address{Department of Mathematics Education, College of Education,
Seoul National University, Building 10, Room 204,
1 Gwanak-ro, Gwanak-gu, Seoul 08826, Republic of Korea}

\email{gyuhyuny@snu.ac.kr}
\date{May 2026}

\begin{document}

\begin{abstract}
We study \emph{test sets}: subfamilies of sequences converging to a point $P$ that still suffice to detect every discontinuity of real-valued functions at $P$.
Ordered by inclusion, these test sets form a poset.
Under natural hypotheses at $P$, we prove that this poset has a minimal element.
We also analyze its maximal chains, showing that some have a least element, while others do not.
Finally, on the \emph{sequential fan} we give a concrete realization in which the minimal test set produced by our construction has strictly smaller cardinality than the full family of convergent sequences.
\end{abstract}

\maketitle
\tableofcontents

\section{Introduction}\label{sec:intro}

In ordinary freshman calculus and introductory analysis, continuity at a point is often tested using sequences:
to verify that a function is continuous at $P$, one checks its behavior along every sequence converging to $P$.
While this criterion is correct in many familiar settings, it raises a natural efficiency question:
must one really inspect the entire family of convergent sequences, or can a much smaller subfamily already detect every possible failure of continuity at $P$?

A familiar motivation comes from primality testing.
The statement ``$n$ is prime'' is equivalent to ``$n$ is not divisible by any integer $d$ with $1<d<n$.'' Nevertheless, an equivalent formulation need not be an \emph{optimal} procedure.
In practice, it suffices to test divisibility only for prime $d\le \sqrt{n}$, and this still yields a logically equivalent criterion with a drastically smaller search space.
The guiding idea of this paper is that sequential tests for continuity can exhibit the same phenomenon:
one may be able to discard many sequences while still detecting \emph{every} discontinuity.

A first concrete hint appears on the real line.
For $f:\R\to\R$, continuity at a point $P$ can be decided by comparing two one-sided behaviors:
if the left limit and the right limit exist and both are equal to $f(P)$, then $f$ is continuous at $P$.
Equivalently, it suffices to test $f(T_n)\to f(P)$ only for sequences that approach $P$ eventually from the left,
and for sequences that approach $P$ eventually from the right.
In this reformulation, highly oscillatory sequences are not examined directly, yet no discontinuity is missed.

The purpose of this paper is to study this efficiency problem in an abstract topological setting.
Rather than asking only whether the full family of convergent sequences can be reduced, we ask which smaller collections of convergent sequences still suffice to detect every failure of continuity at $P$.
To make this question precise, one is naturally led to look not only at all sequences converging to $P$, but also at the smaller subfamilies along which a given discontinuous function actually reveals its discontinuity.
The problem then becomes one of finding compressed testing families that still meet every such witness family.
These notions will be formalized in Section~\ref{sec:prelim}; see in particular Definitions~\ref{def:SpDpDf} and \ref{def:testset}.

Once those testing families are available, they may be compared by inclusion.
This is the natural order for the present problem:
passing from a larger testing family to a smaller one corresponds to discarding sequences while retaining the ability to detect every discontinuity.
The resulting inclusion structure, formalized as the poset $(\F,\subseteq)$ in Definition~\ref{def:testset}, is the main object studied in this paper.

Two structural questions drive the paper.

\begin{question}\label{q:minimal_exists}
Does the inclusion-ordered family of test sets contain a minimal element?
\end{question}

\begin{question}\label{q:chains_reflect}
If minimal test sets exist, must they be reflected in every maximal chain of test sets?
\end{question}

Question~\ref{q:minimal_exists} asks whether there is an irreducible test set.
Question~\ref{q:chains_reflect} asks whether such minimal objects are visible uniformly across the order structure of $(\F,\subseteq)$.
These questions are answered in Theorems~\ref{thm:minimal_exists_intro} and \ref{thm:chains_exist_intro}.

At the same time, the original motivation is one of efficiency.
A test set that is minimal with respect to inclusion need not have smaller cardinality than the full family $\Sp$.
Thus, throughout the paper, we distinguish inclusion-minimality from a genuine reduction in cardinality.
The general results establish the existence of inclusion-minimal test sets, while the sequential fan considered later shows that such a reduction in cardinality can also occur.

\medskip
\noindent\textbf{Standing framework.}
All arguments are carried out at a fixed point $P$ under four standing assumptions:
\begin{itemize}[leftmargin=2.2em,itemsep=0.6em]
\item $(X,P)$ is \emph{Fr\'echet--Urysohn at $P$} (Definition~\ref{def:FU_at_P}), so discontinuity at $P$ is detectable by sequences; see Lemma~\ref{lem:seq_char_continuity}.

\item $X$ is $T_1$ at $P$, so the later set-theoretic construction can be carried out inside $X\setminus\{P\}$; see Lemma~\ref{lem:kernel_T1} and Subsection~\ref{subsec:nonT1_spaces}.

\item $P$ is \emph{non-isolated}, ensuring that there are genuinely nontrivial sequences approaching $P$.

\item There exists a real-valued function discontinuous at $P$, excluding the degenerate case in which every testing family is automatically sufficient; see Lemma~\ref{lem:isolated_implies_Dp_empty} and Remark~\ref{rem:vacuous_case}.
\end{itemize}

\subsection*{Main Results}

\begin{theorem}[Existence of minimal test sets]\label{thm:minimal_exists_intro}
Assume the standing framework at $P$ described above.
Then $(\F,\subseteq)$ has a minimal element.
\end{theorem}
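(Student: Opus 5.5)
The plan is to build a minimal test set directly as a maximal almost-disjoint family of injective sequences converging to $P$. Zorn's lemma will be applied upward, to the almost-disjointness condition, rather than downward in $(\F,\subseteq)$. The downward route fails because the intersection of a decreasing chain of test sets need not be a test set. I use the notation of Definitions~\ref{def:SpDpDf} and \ref{def:testset}: $\Sp$ is the family of sequences converging to $P$; for a function $f$, $\Dp(f)$, written $D_f$ below, is the set of $T\in\Sp$ with $f(T_n)\not\to f(P)$; and a test set is a subfamily of $\Sp$ meeting every $D_f$ with $f$ discontinuous at $P$.

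\emph{Step 1 (reduction to injective sequences).} If $f$ is discontinuous at $P$, there is $\e>0$ with $P\in\cl{A}$, where $A=\{x:|f(x)-f(P)|\ge\e\}$. Note $P\notin A$. By Fr\'echet--Urysohn at $P$ (Lemma~\ref{lem:seq_char_continuity}) there is a sequence in $A$ converging to $P$. Its range must be infinite. Otherwise some point $a\neq P$ occurs infinitely often, so a constant subsequence equal to $a$ converges to $P$, contradicting that $X$ is $T_1$ at $P$. Passing to a subsequence, we get an injective sequence in $A\subseteq X\setminus\{P\}$ converging to $P$, and this sequence lies in $D_f$. The same $T_1$ argument shows that any infinite subset of the range of a sequence in $\Sp$ with infinite range, enumerated injectively, again converges to $P$.

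\emph{Step 2 (maximal almost-disjoint family).} Let $\mathcal{I}$ be the set of injective sequences in $X\setminus\{P\}$ converging to $P$. It is nonempty because $P$ is non-isolated and $X$ is Fr\'echet--Urysohn at $P$. Call two members almost disjoint if their ranges have finite intersection. Families of pairwise almost-disjoint members are closed under unions of chains, so Zorn's lemma gives a maximal such family $\mathcal{M}\subseteq\mathcal{I}$.

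\emph{Step 3 ($\mathcal{M}\in\F$).} Given $f$ discontinuous at $P$, take $A$ and an injective $S\subseteq A$ converging to $P$ as in Step~1. By maximality, the range of $S$ meets the range of some $T\in\mathcal{M}$ in an infinite set. Hence infinitely many terms of $T$ lie in $A$, so $f(T_n)\not\to f(P)$ and $T\in D_f$.

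\emph{Step 4 (minimality).} Fix $T\in\mathcal{M}$ and let $A=\Range(T)$. Take $f=1_A$; then $f(P)=0$ since $P\notin A$. This $f$ is discontinuous at $P$ because $T$ witnesses it. Every other $T'\in\mathcal{M}$ has only finitely many terms in $A$. Since $T'$ is injective, $f(T'_n)=0$ eventually, so $T'\notin D_f$. Thus $\mathcal{M}\setminus\{T\}$ misses $D_f$ and is not a test set. The same argument applies to any proper subfamily, so $\mathcal{M}$ is minimal in $(\F,\subseteq)$.

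The main obstacle is Step~3 together with the convergence bookkeeping. Maximality only yields an infinite overlap of ranges, and we must turn it into the statement that $T$ is frequently in $A$ while all sequences involved still converge to $P$. This is exactly where the $T_1$ hypothesis and the injectivity normalization of Step~1 are used. The standing assumption that a discontinuous function exists guarantees that $\F$ is a genuine (nonvacuous) poset, although the construction itself does not depend on it.
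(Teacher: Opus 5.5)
Your argument is correct and is essentially the paper's proof: Zorn's lemma gives a maximal almost disjoint family, maximality applied to an injective witness lying in $\{x: |f(x)-f(P)|\ge\e\}$ gives the test-set property, and the indicator function of one member's range shows that no member can be removed. The only difference is bookkeeping: you apply Zorn directly to injective convergent sequences with pairwise almost disjoint ranges, whereas the paper applies it to the family $\mathcal I_P$ of ranges and then chooses an injective enumeration of each member (Lemma~\ref{lem:enumerate_IP}), which amounts to the same thing.
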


\begin{theorem}[Chains need not reflect minimal test sets]\label{thm:chains_exist_intro}
Assume the standing framework at $P$ described above.
Then there exist maximal chains $\mathcal{C}_{\mathrm{bad}},\mathcal{C}_{\mathrm{good}}\subseteq\F$ such that
\[
\bigcap \mathcal{C}_{\mathrm{bad}} \notin \F,
\qquad
\bigcap \mathcal{C}_{\mathrm{good}} \in \F.
\]
\end{theorem}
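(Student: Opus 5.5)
The plan is to handle the two chains separately, using Theorem~\ref{thm:minimal_exists_intro} for $\mathcal{C}_{\mathrm{good}}$ and an explicit descending sequence of test sets for $\mathcal{C}_{\mathrm{bad}}$. Both chains are then completed by the Hausdorff maximal principle. Throughout, $\F$ is the family of subsets of $\Sp$ that meet every witness family $\mathcal{D}_g$, where $\mathcal{D}_g$ is the set of sequences in $\Sp$ along which a function $g$ that is discontinuous at $P$ fails $g(T_n)\to g(P)$.

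\emph{The good chain.} Let $M$ be a minimal element of $(\F,\subseteq)$, given by Theorem~\ref{thm:minimal_exists_intro}. The family $\{M\}$ is a chain in $\F$, so the Hausdorff maximal principle extends it to a maximal chain $\mathcal{C}_{\mathrm{good}}\subseteq\F$ containing $M$. Every member $C$ of this chain is comparable with $M$. If $C\subseteq M$, then $C=M$ by minimality, since $C\in\F$. Hence every member of the chain contains $M$, and $M$ itself belongs to the chain, so $\bigcap\mathcal{C}_{\mathrm{good}}=M\in\F$.

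\emph{The bad chain.} First I produce an injective sequence $(x_n)$ in $X\setminus\{P\}$ converging to $P$. Since $P$ is non-isolated, $P\in\cl{X\setminus\{P\}}$, and the Fr\'echet--Urysohn property at $P$ gives a sequence in $X\setminus\{P\}$ converging to $P$. A constant value $y\neq P$ repeated infinitely often cannot converge to $P$, because $X$ is $T_1$ at $P$. So the sequence takes infinitely many distinct values, and passing to a subsequence makes it injective.

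Put $A=\{x_n:n\in\N\}$ and $f=\mathbf 1_A$. Then $f(P)=0$ and $(x_n)\in\mathcal{D}_f$, so $f$ is discontinuous at $P$ and $\mathcal{D}_f\neq\emptyset$. For each $k$ define
\[
C_k=(\Sp\setminus\mathcal{D}_f)\cup\{(x_n)_{n\ge j}: j\ge k\}.
\]
The sets $C_k$ decrease in $k$. Each tail of $(x_n)$ lies in $\mathcal{D}_f$, so $\bigcap_k C_k=\Sp\setminus\mathcal{D}_f$, and this set misses $\mathcal{D}_f$. Hence $\bigcap_k C_k\notin\F$.

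\emph{Main obstacle: each $C_k$ is a test set.} Let $g$ be discontinuous at $P$. If $\mathcal{D}_g\not\subseteq\mathcal{D}_f$, then $C_k$ meets $\mathcal{D}_g$ through $\Sp\setminus\mathcal{D}_f$. So assume $\mathcal{D}_g\subseteq\mathcal{D}_f$; I will show that every tail of $(x_n)$ lies in $\mathcal{D}_g$.
\begin{enumerate}[label=(\roman*)]
\item Pick $s\in\mathcal{D}_g$. There are $\e>0$ and a subsequence $s'$ of $s$ with $|g(s'_n)-g(P)|>\e$ for all $n$.
\item The sequence $s'$ still converges to $P$ and lies in $\mathcal{D}_g\subseteq\mathcal{D}_f$. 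So $f(s'_n)\not\to0$, which means infinitely many terms of $s'$ lie in $A$.
\item No point $x_m\in A$ can occur infinitely often in $s'$, since $x_m\neq P$ and $X$ is $T_1$ at $P$. Hence $s'$ meets infinitely many distinct points $x_{m_1},x_{m_2},\dots$ with $m_1<m_2<\cdots$.
\item Each $x_{m_i}$ is a term of $s'$, so $|g(x_{m_i})-g(P)|>\e$ for all $i$.
\item Every tail of $(x_n)$ contains the subsequence $(x_{m_i})$ from some index on. Therefore $g$ does not converge to $g(P)$ along any tail of $(x_n)$, so every tail lies in $\mathcal{D}_g$, and $C_k\cap\mathcal{D}_g\neq\emptyset$.
\end{enumerate}
Thus each $C_k\in\F$.

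Finally, $\{C_k:k\in\N\}$ is a chain in $\F$. Extend it by the Hausdorff maximal principle to a maximal chain $\mathcal{C}_{\mathrm{bad}}\subseteq\F$. Then
\[
\bigcap\mathcal{C}_{\mathrm{bad}}\subseteq\bigcap_k C_k=\Sp\setminus\mathcal{D}_f,
\]
so $\bigcap\mathcal{C}_{\mathrm{bad}}$ misses $\mathcal{D}_f$, and therefore $\bigcap\mathcal{C}_{\mathrm{bad}}\notin\F$.
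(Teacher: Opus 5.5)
Your proof is correct. The good chain is exactly the paper's argument (Corollary~\ref{cor:good_chain_exists_T1} via Lemma~\ref{lem:min_chain_intersection}), but your bad chain is built along a genuinely different route.

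\textbf{The paper's bad chain.} It fixes an arbitrary reference sequence $a\in\Sp$ and uses the prefix-fixed families $B_n(a)=A_n(a)\setminus\{a\}$. The only tool is the finite modification lemma: overwrite the first $n+1$ terms of any witness so that it lands in $B_n$. This works uniformly in $f$ and makes the intersection empty. It needs only $\Dp\neq\emptyset$ together with $D(f)\neq\emptyset$ from Fr\'echet--Urysohn, with no $T_1$ hypothesis and no injectivity.

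\textbf{Your bad chain.} You instead fix the single function $f=\mathbf{1}_A$, keep all of $\Sp\setminus D(f)$, and shrink only inside $D(f)$. Your key step is a rigidity property of $D(\mathbf{1}_A)$: any $g\in\Dp$ with $D(g)\subseteq D(f)$ is witnessed by every tail of $(x_n)$. This uses $T_1$ at $P$ essentially, once to make $(x_n)$ injective and once to force $s'$ through infinitely many distinct points of $A$. It is the same mechanism as the paper's minimality proof (Lemma~\ref{lem:Amin_minimal}), namely indicators of ranges of injective convergent sequences.

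\textbf{What your route buys.} You get a different kind of bad chain. A member of the maximal extension contained in every $C_k$ would miss $D(f)$, so every member contains some $C_k$. Hence $\bigcap\mathcal C_{\mathrm{bad}}=\Sp\setminus D(f)$ exactly. This is a large nonempty set (it contains the constant sequence at $P$), and it fails to be a test set only on witness families contained in $D(f)$. So badness need not mean the chain collapses to $\emptyset$.

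\textbf{Two small points to state explicitly.} First, $\bigcap_k\{(x_n)_{n\ge j}:j\ge k\}=\emptyset$ holds because injectivity makes the tails pairwise distinct, since the $j$th tail begins with $x_j$. Second, choosing $s\in D(g)$ in step (i) relies on Lemma~\ref{lem:seq_char_continuity}.
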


The first theorem gives an irreducible testing family.
The second shows that this phenomenon is not reflected uniformly in the chain structure of $(\F,\subseteq)$:
some maximal chains admit a least element, while others do not.
In particular, once a minimal test set lies in a chain, it should be realized as the intersection of the chain; see Lemma~\ref{lem:min_chain_intersection}.

\begin{figure}[H]
\centering
\begin{tikzpicture}[
  filled/.style={circle, fill=black, inner sep=1.7pt},
  lab/.style={font=\small},
  arr/.style={-Latex, thick},
  every node/.style={align=center}
]

\node[filled, label={[lab]above:$\Sp$}] (S) at (0,4.4) {};

\node[filled, label={[lab]left:$B_1$}] (B1) at (-3.0,3.0) {};
\node[filled, label={[lab]left:$B_2$}] (B2) at (-3.0,2.0) {};
\node[filled, label={[lab]left:$B_3$}] (B3) at (-3.0,1.0) {};
\node[lab] (Bdots) at (-3.0,0.2) {$\vdots$};
\node[lab] (BadCap) at (-3.0,-0.95) {$\emptyset=\bigcap\mathcal C_{\mathrm{bad}}\notin\F$};

\node[filled, label={[lab]right:$A_1$}] (A1) at (3.0,3.0) {};
\node[filled, label={[lab]right:$A_2$}] (A2) at (3.0,2.0) {};
\node[filled, label={[lab]right:$A_3$}] (A3) at (3.0,1.0) {};
\node[lab] (Adots) at (3.0,0.2) {$\vdots$};
\node[filled, label={[lab]below:$A_{\min}=\bigcap\mathcal C_{\mathrm{good}}\in\F$}] (Amin) at (3.0,-0.95) {};

\draw[arr] (S) -- (B1);
\draw[arr] (S) -- (A1);

\draw[arr] (B1) -- (B2);
\draw[arr] (B2) -- (B3);
\draw[arr] (B3) -- (Bdots);

\draw[arr] (A1) -- (A2);
\draw[arr] (A2) -- (A3);
\draw[arr] (A3) -- (Adots);
\draw[arr] (Adots) -- (Amin);

\node[lab] at (-3.0,-1.8) {bad maximal chain};
\node[lab] at (3.0,-1.8) {good maximal chain};

\node[lab] at (-3.0,-2.45) {intersection lies outside the chain};
\node[lab] at (3.0,-2.45) {minimal test set lies in the chain};

\end{tikzpicture}
\caption{A schematic picture of the two chain behaviors established in this paper.
On the left, a bad maximal chain does not induce a minimal test set.
On the right, a good maximal chain has intersection equal to a minimal test set.}
\label{fig:intro_schematic}
\end{figure}

\subsection*{Outline of the Paper}

In Section~\ref{sec:prelim}, we introduce the common sequential language used throughout the paper and formalize the notions of witness families, test sets, and maximal chains.

In Section~\ref{sec:bad}, we construct a bad maximal chain.
The key tool is the invariance of witnessing under finite modification:
by adjusting finitely many initial terms of a witnessing sequence to agree with a fixed reference sequence, we obtain a descending chain of test sets whose intersection is empty.
Extending this chain to a maximal one yields the bad behavior asserted in Theorem~\ref{thm:chains_exist_intro}.

In Section~\ref{sec:minimal}, we construct a minimal test set to prove Theorem~\ref{thm:minimal_exists_intro}.
The existence of a good maximal chain then follows as a corollary, completing the proof of Theorem~\ref{thm:chains_exist_intro}.
We conclude the section by showing that, in the \emph{sequential fan}, the construction yields a minimal test set with strictly smaller cardinality than the full family $\Sp$.

\section{From continuity to test sets}\label{sec:prelim}

The purpose of this section is to recast the continuity problem at $P$ in a form that can be studied combinatorially.
Starting from the usual sequential criterion for continuity, we pass to \emph{witness families} of convergent sequences and then to the \emph{test sets} that meet all such families.
This produces the inclusion-ordered family $(\F,\subseteq)$ that will be studied in the rest of the paper.

\subsection{Standing framework at \texorpdfstring{$P$}{P}}

Throughout, $X$ is a topological space, $P\in X$ is fixed, and $\mathcal N(P)$ denotes the collection of open neighborhoods of $P$.
The terms \(T_1\), closure, neighborhood, and convergence are used in the standard sense of general topology; see Engelking~\cite{Eng89}.

\begin{definition}\label{def:FU_at_P}
We say that $(X,P)$ is \emph{Fr\'echet--Urysohn at $P$} if for every set $A\subseteq X$,
\[
P\in\cl{A}\ \Longrightarrow\ \exists (a_n)_{n\in\N}\subseteq A\ \text{with}\ a_n\to P.
\]
\end{definition}

\begin{definition}\label{def:nonisolated}
We say that $P$ is \emph{non-isolated} if every neighborhood $U$ of $P$ contains a point $x\neq P$.
Equivalently,
\[
P\in\cl{X\setminus\{P\}}.
\]
\end{definition}

One auxiliary notion will be useful later when we separate the point $P$ from the rest of the space.
Namely, we record the points that lie in every neighborhood of $P$.

\begin{definition}[Neighborhood kernel]\label{def:kernel}
Define the \emph{kernel} of $P$ by
\[
K_P:=\bigcap_{U\in\mathcal N(P)}U.
\]
\end{definition}

These assumptions have distinct roles in the later constructions.
The Fr\'echet--Urysohn hypothesis is classical in the study of spaces where sequences suffice \cite{Fra65}.
It is what makes a sequence-based formulation possible in the first place:
if discontinuity is present at $P$, then some sequence converging to $P$ will witness it; see Lemma~\ref{lem:seq_char_continuity}.
The non-isolation assumption ensures that there are genuinely nontrivial sequences approaching $P$.
The kernel $K_P$ records the points that are forced into every neighborhood of $P$.
Under the $T_1$ hypothesis at $P$, this kernel collapses to $\{P\}$.

\begin{lemma}\label{lem:kernel_T1}
Assume $X$ is $T_1$ at $P$.
Then $K_P=\{P\}$.

\end{lemma}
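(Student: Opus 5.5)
The plan is to prove the two inclusions $\{P\}\subseteq K_P$ and $K_P\subseteq\{P\}$ separately.

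For $\{P\}\subseteq K_P$, we simply note that every $U\in\mathcal N(P)$ contains $P$ by definition of a neighborhood of $P$. Hence $P\in\bigcap_{U\in\mathcal N(P)}U=K_P$. This also uses that $\mathcal N(P)$ is nonempty, since $X$ itself belongs to it, so the intersection is not an empty-indexed (vacuous) one.

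For the reverse inclusion we argue pointwise. Fix $x\in X$ with $x\neq P$. We must exhibit some $U\in\mathcal N(P)$ with $x\notin U$.

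\begin{itemize}
\item We read ``$X$ is $T_1$ at $P$'' in the standard pointwise sense: for every $x\neq P$ there is an open set containing $P$ but not $x$ (and symmetrically one containing $x$ but not $P$). Only the first half is needed here.
\item That open set $U$ lies in $\mathcal N(P)$ and omits $x$, so $x\notin K_P$.
\item If instead the definition is phrased as ``$\{P\}$ is closed'' or ``each $\{x\}$ is closed'', we use the equivalent formulation that $X\setminus\{x\}$ is open for each $x\neq P$. Then $U:=X\setminus\{x\}$ is an open neighborhood of $P$ missing $x$, and the conclusion is the same.
\end{itemize}

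Since every $x\neq P$ is excluded from $K_P$, we get $K_P\subseteq\{P\}$, and combined with the first inclusion, $K_P=\{P\}$.

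The only point requiring care is matching the paper's precise meaning of ``$T_1$ at $P$''. The argument needs exactly the statement that points distinct from $P$ can be avoided by some open neighborhood of $P$. We would state that reading explicitly at the start of the proof. There is no real technical obstacle beyond this.
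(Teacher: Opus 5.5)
Your proof is correct and is essentially the paper's argument: $P$ lies in every neighborhood, and for $x\neq P$ the $T_1$-at-$P$ hypothesis supplies an open neighborhood of $P$ missing $x$, which excludes $x$ from $K_P$. One caution about your hedge: the reading ``$\{P\}$ is closed'' is not equivalent to ``$X\setminus\{x\}$ is open for each $x\neq P$'', and it does not suffice. In the Sierpi\'nski space $\{0,1\}$ with open sets $\emptyset,\{1\},\{0,1\}$, the singleton $\{0\}$ is closed, yet $K_0=\{0,1\}$. So only the pointwise reading (or closedness of every singleton $\{x\}$ with $x\neq P$) supports the lemma.
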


\begin{proof}
Since $P\in U$ for every $U\in\mathcal N(P)$, one always has $P\in K_P$.
Now fix $x\neq P$.
Because $X$ is $T_1$ at $P$, there exists an open neighborhood $U\in\mathcal N(P)$ such that $x\notin U$.
Hence $x\notin \bigcap_{V\in\mathcal N(P)}V=K_P$.
Therefore $K_P=\{P\}$.
\end{proof}

\begin{remark}\label{rem:kernel_consequence}
Under Lemma~\ref{lem:kernel_T1}, every point $x\neq P$ is excluded by some neighborhood of $P$.
Consequently, if a sequence converges to $P$, then any fixed value $x\neq P$ can occur only finitely many times: see Subsection~\ref{subsec:nonT1_spaces} for the case when $T_1$ fails at $P$.
\end{remark}

\subsection{Witnessing discontinuity and test sets}

We now introduce the objects that translate continuity at $P$ into a problem about families of sequences convergent to $P$.
The first step is to record, for each discontinuous function, exactly which convergent sequences witness its failure of continuity.

\begin{definition}[$\Sp$, $\Dp$, and $D(f)$]\label{def:SpDpDf}
\mbox{}
\begin{enumerate}[label=(\roman*)]
\item The family of sequences converging to $P$:
\[
\Sp:=\left\{T=(T_n)_{n\in\N}\ \middle|\ T_n\in X,\ T_n\to P\right\}.
\]

\item The class of real-valued functions discontinuous at $P$:
\[
\Dp:=\{f:X\to\R \mid f \text{ is discontinuous at } P\}.
\]

\item For a function $f:X\to\R$ in $\Dp$, the family of \emph{witnessing sequences}:
\[
D(f):=\left\{T\in\Sp \mid f(T_n)\not\to f(P)\right\}.
\]
\end{enumerate}
\end{definition}

Thus $\Sp$ is the full sequential testing family at $P$, while $D(f)\subseteq\Sp$ singles out those sequences along which the discontinuity of $f$ is actually visible.
The point of this language is that discontinuity is no longer treated only as a local topological failure at $P$;
it is now encoded by a concrete subfamily of the convergent sequences to $P$.

\begin{example}\label{ex:witnessing_in_R}
Let $X=\R$ with the usual topology and $P=0$.
Define
\[
f(x)=
\begin{cases}
\sin(1/x), & x\neq 0,\\
0, & x=0.
\end{cases}
\]
Then the sequence
\[
T_n=\frac{1}{2n\pi+\pi/2}
\]
belongs to $D(f)$, since $T_n\to 0$ but $f(T_n)=1$ for all $n$.
By contrast, the sequence
\[
S_n=\frac{1}{n\pi}
\]
does not belong to $D(f)$, because $S_n\to 0$ and $f(S_n)=0$ for all $n$.
Thus even for a fixed discontinuous function, some convergent sequences witness the discontinuity while others do not.
\end{example}

The next lemma is the basic translation principle of the paper.
Under the Fr\'echet--Urysohn hypothesis, continuity at $P$ is completely determined by the behavior of a function on the family $\Sp$.

\begin{lemma}[Sequential characterization of continuity at $P$]\label{lem:seq_char_continuity}
Assume $(X,P)$ is Fr\'echet--Urysohn at $P$.
For any function $f:X\to\R$, the following are equivalent:
\begin{enumerate}[label=(\roman*)]
\item $f$ is continuous at $P$;
\item for every $T\in\Sp$, one has $f(T_n)\to f(P)$.
\end{enumerate}
In particular,
\[
f\in\Dp \quad\Longleftrightarrow\quad D(f)\neq\emptyset.
\]
\end{lemma}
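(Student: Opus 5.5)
We prove the two implications separately and then read off the ``in particular'' clause.

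For (i)$\Rightarrow$(ii) no hypothesis on $X$ is needed. Fix $T\in\Sp$ and $\e>0$. Continuity of $f$ at $P$ gives an open $U\in\mathcal N(P)$ with $|f(x)-f(P)|<\e$ for all $x\in U$. Since $T_n\to P$, there is $N$ such that $T_n\in U$ for all $n\ge N$. Hence $|f(T_n)-f(P)|<\e$ for $n\ge N$, that is, $f(T_n)\to f(P)$.

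For (ii)$\Rightarrow$(i) we argue by contraposition, and this is the only place the Fr\'echet--Urysohn hypothesis enters. Suppose $f$ is not continuous at $P$. Then there is $\e>0$ such that every $U\in\mathcal N(P)$ contains a point $x$ with $|f(x)-f(P)|\ge\e$. Set
\[
A:=\{x\in X : |f(x)-f(P)|\ge \e\}.
\]
Every neighborhood of $P$ meets $A$, so $P\in\cl{A}$. (Note that $P\notin A$, although this is not needed.) By Definition~\ref{def:FU_at_P} there is a sequence $(a_n)\subseteq A$ with $a_n\to P$. Thus $a=(a_n)\in\Sp$, while $|f(a_n)-f(P)|\ge\e$ for every $n$, so $f(a_n)\not\to f(P)$. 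This contradicts (ii). The only step that uses more than definitions is this passage from $P\in\cl{A}$ to a convergent sequence inside $A$.

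For the final clause: by definition, $f\in\Dp$ means (i) fails. By the equivalence just proved, this happens exactly when (ii) fails, i.e.\ when some $T\in\Sp$ has $f(T_n)\not\to f(P)$. That is precisely the statement $D(f)\neq\emptyset$.
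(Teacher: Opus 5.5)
Your proof is correct and follows essentially the same argument as the paper: the same set $A=\{x:|f(x)-f(P)|\ge\e\}$, the observation $P\in\cl{A}$, and a single application of the Fr\'echet--Urysohn property for (ii)$\Rightarrow$(i). You also spell out (i)$\Rightarrow$(ii) and the ``in particular'' clause in full, which the paper leaves implicit.
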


\begin{proof}
The implication (i)$\Rightarrow$(ii) holds in every topological space.

For (ii)$\Rightarrow$(i), assume that $f$ is not continuous at $P$.
Then there exists $\e>0$ such that for every neighborhood $U$ of $P$ there is $x\in U$ with
\[
|f(x)-f(P)|\ge \e.
\]
Let
\[
A:=\{x\in X:\ |f(x)-f(P)|\ge\e\}.
\]
Then $P\in\cl{A}$.
Since $(X,P)$ is Fr\'echet--Urysohn at $P$, there exists a sequence $T=(a_n)_{n\in\N}\subseteq A$ with $a_n\to P$.
Since
\[
|f(T_n)-f(P)|\ge\e \qquad\text{for all } n,
\]
it follows that $f(T_n)\not\to f(P)$.
This contradicts (ii).
Hence $f$ is continuous at $P$.
\end{proof}

The following lemma identifies exactly when the testing problem becomes degenerate.

\begin{lemma}\label{lem:isolated_implies_Dp_empty}
The following are equivalent:
\begin{enumerate}[label=(\roman*)]
\item $P$ is isolated;
\item $\Dp=\emptyset$.
\end{enumerate}
\end{lemma}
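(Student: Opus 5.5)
We prove the two implications directly from the definitions. No standing hypotheses are needed: neither Fr\'echet--Urysohn nor $T_1$ enters.

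\emph{(i)$\Rightarrow$(ii).} Suppose $P$ is isolated, so $\{P\}$ is an open neighborhood of $P$. Take any $f:X\to\R$ and any $\e>0$. The neighborhood $U=\{P\}$ satisfies $|f(x)-f(P)|=0<\e$ for every $x\in U$. Hence every function is continuous at $P$, and $\Dp=\emptyset$.

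\emph{(ii)$\Rightarrow$(i).} We argue by contraposition: assuming $P$ is non-isolated, we exhibit an explicit function discontinuous at $P$. Let $f$ be the indicator of $\{P\}$, so $f(P)=1$ and $f(x)=0$ for $x\neq P$. By Definition~\ref{def:nonisolated}, every neighborhood $U$ of $P$ contains some $x\neq P$, and at that point $|f(x)-f(P)|=1$. With $\e=1/2$, no neighborhood of $P$ maps into $(f(P)-\e,f(P)+\e)$, so $f$ is discontinuous at $P$. Thus $f\in\Dp$ and $\Dp\neq\emptyset$.

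There is no real obstacle here. The only point to keep straight is that continuity at $P$ is taken in the neighborhood sense, not the sequential sense. This avoids any appeal to Lemma~\ref{lem:seq_char_continuity}, so the equivalence holds in an arbitrary topological space. Consequently, the fourth standing assumption (existence of a function discontinuous at $P$) is equivalent to the third (non-isolation of $P$).
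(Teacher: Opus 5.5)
Your proposal is correct and follows the paper's proof: (i)$\Rightarrow$(ii) because $\{P\}$ is itself a neighborhood of $P$, and (ii)$\Rightarrow$(i) by contraposition using the indicator function of $\{P\}$. Your $\e=1/2$ argument is the same as the paper's observation that the preimage of $\left(\frac12,\frac32\right)$ is $\{P\}$, which is not a neighborhood of $P$.
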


\begin{proof}
Assume first that $P$ is isolated.
Then $\{P\}\in\mathcal N(P)$.
Let $f:X\to\R$ be arbitrary and let $V\subseteq\R$ be an open neighborhood of $f(P)$.
Since $P\in f^{-1}(V)$, we have
\[
\{P\}\subseteq f^{-1}(V).
\]
So $f$ is continuous at $P$.
Since $f$ was arbitrary, no function is discontinuous at $P$, and therefore $\Dp=\emptyset$.

Conversely, assume that $P$ is not isolated.
Define $h:X\to\R$ by
\[
h(x)=
\begin{cases}
1, & x=P,\\
0, & x\neq P.
\end{cases}
\]
Then $h(P)=1$.
Let
\[
V:=\left(\frac12,\frac32\right).
\]
Then $V$ is an open neighborhood of $h(P)$, and
\[
h^{-1}(V)=\{P\}.
\]
Since $P$ is not isolated, $\{P\}$ is not a neighborhood of $P$.
Thus $h^{-1}(V)$ is not a neighborhood of $P$, so $h$ is not continuous at $P$.
Hence $h\in\Dp$, and therefore $\Dp\neq\emptyset$.

Thus $P$ is isolated if and only if $\Dp=\emptyset$.
\end{proof}

\begin{remark}\label{rem:vacuous_case}
If $\Dp=\emptyset$, then the condition of being a test set becomes vacuous:
there are no discontinuous functions whose witness families must be met.
By Lemma~\ref{lem:isolated_implies_Dp_empty}, this happens exactly when $P$ is isolated.
Accordingly, all existence statements about minimal test sets exclude precisely this degenerate case.
\end{remark}

Once the witness families $D(f)$ are available, the original efficiency problem can be reformulated.
Rather than testing continuity against the full family $\Sp$, one asks whether a smaller subfamily can still meet every witness family.

\begin{definition}[Test set]\label{def:testset}
A subset $A\subseteq\Sp$ is called a \emph{test set} if
\[
\forall f\in\Dp,\quad A\cap D(f)\neq\emptyset.
\]
That is, $A$ must meet the witness family of every discontinuous function at $P$. We denote by
\[
\F:=\{A\subseteq\Sp:\ A \text{ is a test set}\}
\]
the family of all test sets.
\end{definition}

\begin{remark}\label{rem:Sp_is_testset}
The full family $\Sp$ is a test set.
Indeed, if $f\in\Dp$, then $D(f)\neq\emptyset$ by Lemma~\ref{lem:seq_char_continuity}, and of course $D(f)\subseteq\Sp$.
Hence $\Sp\cap D(f)\neq\emptyset$ for every $f\in\Dp$.
\end{remark}

A test set may therefore be viewed as a compressed continuity detector:
it may discard many convergent sequences, but it must still hit every witness family $D(f)$.

Once the poset $(\F,\subseteq)$ has been identified, two structural notions become central.
One concerns the existence of smallest possible test sets, and the other concerns the behavior of maximal chains inside the poset.

\begin{definition}[Minimal test set]\label{def:minimal_testset}
A test set $A\in\F$ is called \emph{minimal} if it is a minimal element of $(\F,\subseteq)$; that is,
\[
\forall B\in\F,\quad B\subseteq A \implies B=A.
\]
\end{definition}

\begin{definition}[Good and bad maximal chains]\label{def:goodbad}
Let $\mathcal C\subseteq\F$ be a maximal chain in the poset $(\F,\subseteq)$, and write
\[
\bigcap\mathcal C:=\bigcap_{A\in\mathcal C}A.
\]
The chain $\mathcal C$ is called \emph{good} if $\bigcap\mathcal C\in\F$, and \emph{bad} otherwise.
\end{definition}

\begin{lemma}\label{lem:min_chain_intersection}
Let $A_{\min}\in\F$ be a minimal element.
If $\mathcal C\subseteq\F$ is a chain with $A_{\min}\in\mathcal C$, then
\[
\bigcap_{A\in\mathcal C}A=A_{\min}.
\]
\end{lemma}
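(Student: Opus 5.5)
The plan is a direct two-inclusion argument that uses only the minimality of $A_{\min}$ and the fact that $\mathcal C$ is totally ordered by inclusion. No topological input is needed. The Fr\'echet--Urysohn and $T_1$ hypotheses play no role here, since the statement concerns the order structure of $(\F,\subseteq)$ alone.

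\emph{Inclusion $\bigcap\mathcal C\subseteq A_{\min}$.} This is immediate: $A_{\min}$ is one of the sets being intersected, so the intersection is contained in it.

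\emph{Inclusion $A_{\min}\subseteq\bigcap\mathcal C$.} It suffices to show that $A_{\min}\subseteq A$ for every $A\in\mathcal C$. Fix $A\in\mathcal C$. Because $\mathcal C$ is a chain containing $A_{\min}$, either $A_{\min}\subseteq A$ or $A\subseteq A_{\min}$. In the first case we are done. In the second case, $A$ is an element of $\F$ (since $\mathcal C\subseteq\F$) lying below $A_{\min}$, so Definition~\ref{def:minimal_testset} forces $A=A_{\min}$, and in particular $A_{\min}\subseteq A$. Hence $A_{\min}$ is contained in every member of $\mathcal C$, and therefore in the intersection.

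Combining the two inclusions gives $\bigcap_{A\in\mathcal C}A=A_{\min}$. In effect, $A_{\min}$ is the least element of $\mathcal C$. There is no real obstacle. The only point needing care is that the comparison step uses membership of $A$ in $\F$, which is exactly what lets the minimality of $A_{\min}$ apply. As a consequence, any chain through a minimal test set is good in the sense of Definition~\ref{def:goodbad}.
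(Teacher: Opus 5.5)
Your proof is correct and follows the same route as the paper. You prove the trivial inclusion $\bigcap\mathcal C\subseteq A_{\min}$ from $A_{\min}\in\mathcal C$, and the reverse inclusion from comparability within the chain plus minimality; you spell out the case $A\subseteq A_{\min}$, which the paper compresses into one line. One small point: in your closing remark, ``good'' is only defined for \emph{maximal} chains, so it should read ``any maximal chain through a minimal test set is good.''
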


\begin{proof}
Let $A\in\mathcal C$.
Since $\mathcal C$ is a chain and $A_{\min}\in\mathcal C$, the sets $A$ and $A_{\min}$ are comparable under inclusion.
By minimality of $A_{\min}$ we must have $A_{\min}\subseteq A$.
Since this holds for every $A\in\mathcal C$, we obtain
\[
A_{\min}\subseteq \bigcap_{A\in\mathcal C}A.
\]
On the other hand, because $A_{\min}\in\mathcal C$, we also have
\[
\bigcap_{A\in\mathcal C}A\subseteq A_{\min}.
\]
Therefore
\[
\bigcap_{A\in\mathcal C}A=A_{\min}.
\]
\end{proof}
The following is a standard consequence of Zorn's lemma; for background on maximal principles in set theory, see Jech~\cite{Jec03}.
We include the proof for completeness.
\begin{lemma}[Hausdorff maximal principle]\label{lem:chain_extension}
Let $(P,\le)$ be a partially ordered set, and let $C_0\subseteq P$ be a chain.
Then there exists a maximal chain $C\subseteq P$ such that
\[
C_0\subseteq C.
\]
\end{lemma}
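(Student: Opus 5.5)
Our plan is to apply Zorn's lemma to the collection of all chains in $(P,\le)$ that contain $C_0$. Let
\[
\mathfrak{X}:=\{C\subseteq P:\ C \text{ is a chain and } C_0\subseteq C\},
\]
ordered by inclusion. This collection is nonempty because $C_0\in\mathfrak{X}$. The chains occurring here are subsets of $P$ that are totally ordered by $\le$. The ordering on $\mathfrak{X}$ is a different one, namely inclusion of such subsets.

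The first step is to verify the hypothesis of Zorn's lemma. Let $\mathfrak{K}\subseteq\mathfrak{X}$ be a nonempty subfamily that is totally ordered by $\subseteq$. We take $U:=\bigcup\mathfrak{K}$ as its upper bound. It certainly contains $C_0$. To show that $U$ is a chain in $P$, pick $x,y\in U$, say $x\in C_1$ and $y\in C_2$ with $C_1,C_2\in\mathfrak{K}$. Since $\mathfrak{K}$ is totally ordered by inclusion, one of $C_1,C_2$ contains the other, say $C_1\subseteq C_2$. Then $x$ and $y$ both lie in the chain $C_2$, so they are comparable. Hence $U\in\mathfrak{X}$, and $U$ is an upper bound for $\mathfrak{K}$. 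If $\mathfrak{K}$ is empty, $C_0$ itself serves as an upper bound.

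By Zorn's lemma, $\mathfrak{X}$ has a maximal element $C$. The last step is to check that $C$ is a maximal chain in all of $P$, and not merely maximal among chains containing $C_0$. Suppose $C'\supseteq C$ is any chain in $P$. Then $C'\supseteq C\supseteq C_0$, so $C'\in\mathfrak{X}$, and maximality of $C$ in $\mathfrak{X}$ forces $C'=C$. Therefore $C$ is a maximal chain of $P$ with $C_0\subseteq C$.

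No step is a real obstacle. The only point that needs care is the comparability argument for the union, which is where we use that $\mathfrak{K}$ is totally ordered by inclusion, and not just a family of chains.
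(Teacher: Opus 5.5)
Your proposal is correct and matches the paper's proof: Zorn's lemma applied to the family of chains containing $C_0$, ordered by inclusion, with the union of an inclusion-chain of such chains as the upper bound. You are slightly more careful than the paper in two places: you handle the empty subfamily, where the union is $\emptyset$ and need not contain $C_0$, and you check explicitly that maximality in $\mathfrak{X}$ gives maximality among all chains of $P$.
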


\begin{proof}
Let
\[
\mathfrak C:=\{D\subseteq P:\ D \text{ is a chain and } C_0\subseteq D\},
\]
partially ordered by inclusion.
Since \(C_0\) is itself a chain and \(C_0\subseteq C_0\), we have \(C_0\in\mathfrak C\).
Thus \(\mathfrak C\neq\emptyset\).

We verify that every chain in $\mathfrak C$ has an upper bound in $\mathfrak C$.
Let $\{D_i\}_{i\in I}$ be a chain in $\mathfrak C$, and set
\[
D:=\bigcup_{i\in I}D_i.
\]
Then clearly $C_0\subseteq D$.
It remains to show that $D$ is a chain in $P$.

Let $x,y\in D$.
Then $x\in D_i$ and $y\in D_j$ for some $i,j\in I$.
Since $\{D_i\}_{i\in I}$ is a chain under inclusion, either $D_i\subseteq D_j$ or $D_j\subseteq D_i$.
Hence $x$ and $y$ belong to a common chain, so they are comparable in $P$.
Therefore $D$ is a chain, and thus $D\in\mathfrak C$.

By Zorn's lemma, $\mathfrak C$ has a maximal element $C$.
Then $C$ is a maximal chain in $P$ containing $C_0$.
\end{proof}

These definitions separate the two main directions of the paper.
Section~\ref{sec:bad} shows that minimal test sets need not be realized in every maximal chain.
Section~\ref{sec:minimal} constructs a minimal test set and then derives a good maximal chain from it using Lemmas~\ref{lem:min_chain_intersection} and \ref{lem:chain_extension}.

\section{Constructing a bad maximal chain}\label{sec:bad}

This section proves the negative side of Question~\ref{q:chains_reflect}.
We show that even when minimal test sets exist, they need not be reflected in every maximal chain of $(\F,\subseteq)$.
To prove this, we construct a descending chain of test sets whose intersection is empty, and then extend it to a maximal chain.
Lemma~\ref{lem:finite_modification} is the key technical tool, because it allows us to prescribe finitely many initial terms of a sequence without destroying its witness property.

\subsection{Finite modification}

We begin by recording the basic fact that changing only finitely many terms of a convergent sequence does not affect either convergence to $P$ or the property of witnessing discontinuity.

\begin{lemma}[Finite modification lemma]\label{lem:finite_modification}
Let $T=(T_n)\in\Sp$ and let $S=(S_n)$ be a sequence in $X$.
Assume that there exists $N\in\N$ such that
\[
T_n=S_n \qquad\text{for all } n\ge N.
\]
Then:
\begin{enumerate}[label=(\roman*)]
\item $S\in\Sp$;
\item for every function $f:X\to\R$, one has
\[
T\in D(f)\quad\Longleftrightarrow\quad S\in D(f).
\]
\end{enumerate}
\end{lemma}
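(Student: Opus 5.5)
Both parts reduce to one elementary fact: convergence of a sequence in a topological space, and convergence of a real sequence, depend only on the tails of the sequences. No use of the Fréchet--Urysohn or $T_1$ hypotheses is needed here. The only point needing a little care is that the threshold $N$ from the hypothesis must be combined with the threshold coming from each neighborhood or each $\e$.

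For (i), I fix an open neighborhood $U\in\mathcal N(P)$. Since $T_n\to P$, there is $M\in\N$ with $T_n\in U$ for all $n\ge M$. Put $M':=\max(M,N)$. For $n\ge M'$ we have $S_n=T_n\in U$, so $S$ is eventually in $U$. Since $U$ was arbitrary, $S_n\to P$, that is, $S\in\Sp$.

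For (ii), I fix any $f:X\to\R$. Both $T$ and $S$ lie in $\Sp$ (by hypothesis and by (i)), so membership in $D(f)$ is decided solely by whether $f(T_n)\to f(P)$, respectively $f(S_n)\to f(P)$. The real sequences $(f(T_n))$ and $(f(S_n))$ agree for all $n\ge N$. I prove the equivalence of the non-witnessing conditions, which is the same as the stated equivalence.

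Assume $f(T_n)\to f(P)$ and let $\e>0$. Choose $M$ with $|f(T_n)-f(P)|<\e$ for $n\ge M$. Then for $n\ge\max(M,N)$ we get $|f(S_n)-f(P)|=|f(T_n)-f(P)|<\e$, hence $f(S_n)\to f(P)$.

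The converse direction is symmetric, because the relation ``agree from index $N$ on'' is symmetric. Concretely, applying the same argument with the roles of $S$ and $T$ exchanged (which is legitimate since $S\in\Sp$ by (i)) gives $f(S_n)\to f(P)\Rightarrow f(T_n)\to f(P)$. Negating both sides then yields $T\in D(f)\iff S\in D(f)$.
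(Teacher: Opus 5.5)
Your proof is correct and follows the paper's argument. Part (i) is the same tail argument with threshold $\max(M,N)$. In part (ii) you show that convergence of $f(T_n)$ transfers to $f(S_n)$ and then negate, whereas the paper transfers non-convergence directly by picking indices $m\ge\max\{k,N\}$ with $|f(T_m)-f(P)|\ge\e$; this is only a cosmetic contrapositive difference.
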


\begin{proof}
(i) Let $U\in\mathcal N(P)$.
Since $T\to P$, there exists $K$ such that $T_n\in U$ for all $n\ge K$.
If $n\ge \max\{N,K\}$, then $S_n=T_n\in U$.
Hence $S\to P$, so $S\in\Sp$.

(ii) Suppose first that $T\in D(f)$.
Then $f(T_n)\not\to f(P)$.
Hence there exists $\e>0$ such that for every $k\in\N$ there exists $m\ge k$ with
\[
|f(T_m)-f(P)|\ge \e.
\]
Fix $k$ and choose such an $m\ge \max\{k,N\}$.
Then $S_m=T_m$, so
\[
|f(S_m)-f(P)|=|f(T_m)-f(P)|\ge \e.
\]
Thus $f(S_n)\not\to f(P)$, hence $S\in D(f)$.
The converse implication is proved in exactly the same way.
\end{proof}

\subsection{Prefix-fixed families}

Once finite modification is available, the natural way to build a descending chain is to prescribe longer and longer initial segments.
This leads to the prefix-fixed families introduced below.

Fix once and for all a reference sequence
\[
a=(a_k)_{k\in\N}\in\Sp.
\]

\begin{definition}[$A_n(a)$ and $B_n(a)$]\label{def:AnBn}
For each $n\in\N$, define
\[
A_n(a):=\Bigl\{T=(T_k)_{k\in\N}\in\Sp \ \Big|\ T_k=a_k \text{ for every } 1\le k\le n\Bigr\},
\]
and
\[
B_n(a):=A_n(a)\setminus\{a\}.
\]
When no confusion is likely, we simply write $A_n$ and $B_n$; see Figure~\ref{fig:badchain_prefix_picture}.
\end{definition}

The family $A_n(a)$ consists of sequences whose first $n$-terms are prescribed following $a$.
The family $B_n(a)$ is obtained by removing the single sequence $a$ itself.

\begin{figure}[ht!]
\centering
\begin{tikzpicture}[
  box/.style={draw, rounded corners, align=center, inner sep=4pt, font=\small},
  arr/.style={-Latex, thick},
  node distance=7mm and 10mm
]
\node[box] (Sp) {$\Sp$};
\node[box, below=of Sp] (A1) {$A_1(a)$\\(fix first term)};
\node[box, below=of A1] (A2) {$A_2(a)$\\(fix first two terms)};
\node[box, below=of A2] (A3) {$A_3(a)$\\(fix first three terms)};
\node[box, below=of A3] (dots) {$\vdots$};
\node[box, below=of dots] (cap) {$\bigcap_{n\in\N} A_n(a)=\{a\}$};

\draw[arr] (Sp) -- (A1);
\draw[arr] (A1) -- (A2);
\draw[arr] (A2) -- (A3);
\draw[arr] (A3) -- (dots);
\draw[arr] (dots) -- (cap);

\node[box, right=3.1cm of A1] (B1) {$B_1(a)=A_1(a)\setminus\{a\}$};
\node[box, below=of B1] (B2) {$B_2(a)=A_2(a)\setminus\{a\}$};
\node[box, below=of B2] (B3) {$B_3(a)=A_3(a)\setminus\{a\}$};
\node[box, below=of B3] (dots2) {$\vdots$};
\node[box, below=of dots2] (cap2) {$\bigcap_{n\in\N} B_n(a)=\emptyset$};

\draw[arr] (B1) -- (B2);
\draw[arr] (B2) -- (B3);
\draw[arr] (B3) -- (dots2);
\draw[arr] (dots2) -- (cap2);

\draw[arr] (A1) -- (B1);
\draw[arr] (A2) -- (B2);
\draw[arr] (A3) -- (B3);

\end{tikzpicture}
\caption{Prefix-fixing families $A_n(a)$ shrink to $\{a\}$, while removing $a$ forces $\bigcap_n B_n(a)=\emptyset$.}
\label{fig:badchain_prefix_picture}
\end{figure}

\begin{remark}\label{rem:nontrivial_space_from_Dp}
If \(\Dp\neq\emptyset\), then \(X\) is not a singleton.
Indeed, by Lemma~\ref{lem:isolated_implies_Dp_empty}, the assumption \(\Dp\neq\emptyset\) implies that \(P\) is non-isolated.
If \(X=\{P\}\), then \(P\) is isolated, a contradiction.
\end{remark}

At this point two facts must be checked.
First, each prefix-fixed family still has enough witnessing sequences to remain a test set.
Second, the families shrink so tightly that their total intersection disappears.

\begin{lemma}\label{lem:Bn_chain}
Assume $\Dp\neq\emptyset$.
Then for every $n\in\N$:
\begin{enumerate}[label=(\roman*)]
\item $B_n\neq\emptyset$;
\item $B_{n+1}\subseteq B_n$.
\end{enumerate}
\end{lemma}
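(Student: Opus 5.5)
Part (ii) follows directly from the definitions, so I will dispose of it first. If $T\in A_{n+1}(a)$, then $T_k=a_k$ for every $1\le k\le n+1$. In particular $T_k=a_k$ for every $1\le k\le n$, so $T\in A_n(a)$. Hence $A_{n+1}\subseteq A_n$. Removing the same element $a$ from both sides preserves the inclusion, giving $B_{n+1}\subseteq B_n$.

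For part (i) I need to produce a sequence in $A_n(a)$ that differs from $a$ at some index beyond $n$, and then use Lemma~\ref{lem:finite_modification}(i) to see that it still converges to $P$. Since $\Dp\neq\emptyset$, Remark~\ref{rem:nontrivial_space_from_Dp} tells us that $X$ has at least two points. Fix the index $m=n+1$ and choose a point $x\in X$ with $x\neq a_{n+1}$. Define $S$ by
\[
S_k=a_k \ (k\neq n+1),\qquad S_{n+1}=x .
\]
Then $S$ agrees with $a$ for all $k\ge n+2$, so Lemma~\ref{lem:finite_modification}(i) with $N=n+2$ gives $S\in\Sp$. By construction $S_k=a_k$ for $1\le k\le n$, so $S\in A_n(a)$. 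Also $S\neq a$, because they differ at the coordinate $n+1$. Therefore $S\in B_n(a)$, and $B_n\neq\emptyset$.

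The only place that needs care is making sure a genuinely different value is available at coordinate $n+1$. This is exactly what the nontriviality of $X$ supplies. Beyond that, no topological hypothesis (neither $T_1$ nor Fr\'echet--Urysohn) is needed, since changing one term never affects convergence. I would note this explicitly, because the substantive claim that each $B_n$ is a test set, to be shown later, will instead use Lemma~\ref{lem:finite_modification}(ii).
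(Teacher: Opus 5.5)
Your proof is correct and follows the paper's argument essentially verbatim. Part (ii) comes from $A_{n+1}\subseteq A_n$, and part (i) changes only the $(n+1)$st term of $a$ to some $x\neq a_{n+1}$ and then invokes Lemma~\ref{lem:finite_modification}(i); citing Remark~\ref{rem:nontrivial_space_from_Dp} for the existence of $x$ is, if anything, slightly more precise than the paper's direct appeal to Lemma~\ref{lem:isolated_implies_Dp_empty}.
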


\begin{proof}
The inclusion $B_{n+1}\subseteq B_n$ is immediate from $A_{n+1}\subseteq A_n$.

For nonemptiness, fix $n$.
By Lemma~\ref{lem:isolated_implies_Dp_empty}, the space $X$ has some point different from $a_{n+1}$; choose $x\in X$ with $x\neq a_{n+1}$.
Define a sequence $T=(T_k)$ by
\[
T_k=
\begin{cases}
a_k,& k\neq n+1,\\
x,& k=n+1.
\end{cases}
\]
Then $T$ agrees with $a$ for all sufficiently large $k$, so by Lemma~\ref{lem:finite_modification}(i) we have $T\in\Sp$.
Also $T_k=a_k$ for all $1\le k\le n$, hence $T\in A_n$.
Since $T_{n+1}=x\neq a_{n+1}$, we have $T\neq a$.
Thus $T\in B_n$.
\end{proof}

\begin{proposition}\label{prop:Bn_is_testset}
Assume $\Dp\neq\emptyset$.
Then for every $n\in\N$,
\[
B_n\in\F.
\]
\end{proposition}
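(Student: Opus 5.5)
Fix $f\in\Dp$; we must produce an element of $B_n\cap D(f)$. By Lemma~\ref{lem:seq_char_continuity} (the standing Fr\'echet--Urysohn hypothesis is in force), $D(f)\neq\emptyset$, so pick some $T\in D(f)\subseteq\Sp$. The idea is to overwrite the first $n$ terms of $T$ with $a_1,\dots,a_n$. Define $S$ by $S_k=a_k$ for $1\le k\le n$ and $S_k=T_k$ for $k>n$. Since $S$ and $T$ agree for all $k\ge n+1$, Lemma~\ref{lem:finite_modification} gives $S\in\Sp$ and $S\in D(f)$. By construction $S_k=a_k$ for $1\le k\le n$, so $S\in A_n$.

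The only remaining issue is to ensure $S\neq a$, so that $S\in B_n=A_n\setminus\{a\}$. This is the main obstacle: the spliced sequence might coincide with $a$ itself. I would split into two cases. If $S\neq a$, we are done. If $S=a$, then $a\in D(f)$. In this case I would modify a single tail coordinate instead: choose $x\in X$ with $x\neq a_{n+1}$. Such an $x$ exists by Remark~\ref{rem:nontrivial_space_from_Dp}, since $X$ is not a singleton. Define $S'$ to agree with $a$ except that $S'_{n+1}=x$, exactly as in the proof of Lemma~\ref{lem:Bn_chain}. Then $S'$ agrees with $a$ from index $n+2$ on, so Lemma~\ref{lem:finite_modification} yields $S'\in\Sp$ and, because $a\in D(f)$, also $S'\in D(f)$. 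Moreover $S'\in A_n$ and $S'\neq a$, hence $S'\in B_n\cap D(f)$.

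In either case $B_n\cap D(f)\neq\emptyset$. As $f\in\Dp$ was arbitrary, $B_n\in\F$. Note that the argument uses the finite modification lemma twice, and the existence of a point other than $a_{n+1}$ is the only place the hypothesis $\Dp\neq\emptyset$ enters beyond guaranteeing that $D(f)$ is nonempty.
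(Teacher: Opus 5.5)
Your proof is correct and is essentially the paper's argument. The paper builds a single sequence directly: it takes $a_1,\dots,a_n$, then a point $x\neq a_{n+1}$ in position $n+1$, then the tail of a witnessing sequence from index $n+2$ on. This makes it differ from $a$ from the start, so your case split is unnecessary; your second case is exactly this construction applied when the spliced sequence happens to equal $a$.
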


\begin{proof}
Fix $n\in\N$ and let $f\in\Dp$.
By Lemma~\ref{lem:seq_char_continuity}, the witness family $D(f)$ is nonempty, so choose
\[
S=(S_k)\in D(f).
\]
By Remark~\ref{rem:nontrivial_space_from_Dp}, choose $x\in X$ with $x\neq a_{n+1}$.
Define a new sequence $T=(T_k)$ by
\[
T_k=
\begin{cases}
a_k,& 1\le k\le n,\\
x,& k=n+1,\\
S_k,& k\ge n+2.
\end{cases}
\]
Then $T$ differs from $S$ in only finitely many terms.
Hence by Lemma~\ref{lem:finite_modification},
\[
T\in\Sp \quad\text{and}\quad T\in D(f).
\]
Moreover, $T\in A_n$ because its first $n$ terms agree with those of $a$, while
$T\neq a$ because $T_{n+1}=x\neq a_{n+1}$.
Thus $T\in B_n\cap D(f)$.

Since $f\in\Dp$ was arbitrary, it follows that $B_n$ is a test set.
\end{proof}

\subsection{Existence of a bad maximal chain}

We now show that although each $B_n$ is a test set, their intersection is empty.

\begin{lemma}\label{lem:Bn_intersection_empty}
\[
\bigcap_{n\in\N} B_n=\emptyset.
\]
\end{lemma}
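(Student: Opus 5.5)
The plan is a direct elementwise argument. We show first that $\bigcap_{n\in\N}A_n(a)=\{a\}$, and then remove $a$.

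\emph{Step 1.} Note that $a\in A_n(a)$ for every $n$, because $a\in\Sp$ and $a$ agrees with itself on every prefix. Hence $a\in\bigcap_n A_n(a)$.

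\emph{Step 2.} Let $T\in\bigcap_n A_n(a)$ be arbitrary. Fix any index $k\in\N$. Since $T\in A_k(a)$, the definition of $A_k(a)$ forces $T_j=a_j$ for all $1\le j\le k$, and in particular $T_k=a_k$. As $k$ was arbitrary, $T$ and $a$ coincide termwise, so $T=a$ as sequences. This gives $\bigcap_n A_n(a)=\{a\}$.

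\emph{Step 3.} Since $B_n=A_n\setminus\{a\}$, we compute
\[
\bigcap_{n\in\N}B_n=\Bigl(\bigcap_{n\in\N}A_n\Bigr)\setminus\{a\}=\{a\}\setminus\{a\}=\emptyset .
\]
The first equality is the elementary identity $\bigcap_n (A_n\setminus E)=(\bigcap_n A_n)\setminus E$.

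There is no genuine obstacle in this argument. The only point needing care is Step 2. Two sequences are equal exactly when they agree at every index, and each index $k$ is captured by the single stage $A_k(a)$. So no limiting or topological argument is required, and none of the standing hypotheses, such as Fr\'echet--Urysohn or $T_1$, is used. We also do not need Lemma~\ref{lem:Bn_chain} or Proposition~\ref{prop:Bn_is_testset} for this statement; those results only become relevant when the $B_n$ are later assembled into a bad chain.
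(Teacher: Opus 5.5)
Your proof is correct and is essentially the paper's argument: an element of the intersection must agree with $a$ at every index, so it equals $a$, and $a$ has been removed. The paper phrases this as a contradiction with $T\in B_1$ rather than through the identity $\bigcap_n(A_n\setminus\{a\})=(\bigcap_n A_n)\setminus\{a\}$; your Step 1 ($a\in A_n$ for all $n$) is harmless but not needed for emptiness.
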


\begin{proof}
Suppose that $T\in\bigcap_{n\in\N} B_n$.
Then for every $n$, since $T\in B_n\subseteq A_n$, we have
\[
T_k=a_k \qquad\text{for all } 1\le k\le n.
\]
As this holds for every $n$, it follows that $T_k=a_k$ for all $k\in\N$.
Thus $T=a$.

But $T\in B_1=A_1\setminus\{a\}$ implies $T\neq a$, a contradiction.
Therefore
\[
\bigcap_{n\in\N} B_n=\emptyset.
\]
\end{proof}

At this stage the construction is complete.
We have produced a descending chain of test sets whose intersection is empty.
A maximal extension of this chain is therefore automatically bad.

\begin{theorem}\label{thm:bad_chain_exists}
Assume $\Dp\neq\emptyset$.
Then there exists a bad maximal chain in $(\F,\subseteq)$.
\end{theorem}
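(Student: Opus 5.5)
The plan is to take the descending family $\mathcal C_0:=\{B_n : n\in\N\}$ constructed above, extend it to a maximal chain, and check that this maximal chain is bad.

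First, I will verify that $\mathcal C_0$ is a chain in $(\F,\subseteq)$. Proposition~\ref{prop:Bn_is_testset} (using $\Dp\neq\emptyset$) gives $B_n\in\F$ for every $n$. Lemma~\ref{lem:Bn_chain}(ii) gives $B_{n+1}\subseteq B_n$, so by induction $B_m\subseteq B_n$ whenever $m\ge n$. Hence any two members of $\mathcal C_0$ are comparable under inclusion.

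Next, I will apply Lemma~\ref{lem:chain_extension} to the poset $(\F,\subseteq)$ with the chain $\mathcal C_0$. This gives a maximal chain $\mathcal C\subseteq\F$ with $\mathcal C_0\subseteq\mathcal C$.

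Finally, I will compute $\bigcap\mathcal C$. Since every $B_n$ is a member of $\mathcal C$, enlarging the family can only shrink the intersection:
\[
\bigcap_{A\in\mathcal C}A\subseteq\bigcap_{n\in\N}B_n=\emptyset,
\]
where the equality is Lemma~\ref{lem:Bn_intersection_empty}. So $\bigcap\mathcal C=\emptyset$.

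It remains to show that $\emptyset\notin\F$. Take any $f\in\Dp$, which exists by hypothesis. Then $\emptyset\cap D(f)=\emptyset$, so the empty family fails the test-set condition. Therefore $\bigcap\mathcal C\notin\F$, and $\mathcal C$ is bad in the sense of Definition~\ref{def:goodbad}.

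No step here is a real obstacle, since the preceding lemmas do all the work. The only point needing care is the last one: the hypothesis $\Dp\neq\emptyset$ is exactly what excludes the empty set from $\F$. If $\Dp$ were empty, every subset of $\Sp$, including $\emptyset$, would be a test set, and the chain would not be bad.
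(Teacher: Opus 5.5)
Your proposal is correct and follows the paper's proof essentially step for step: extend the descending chain $\{B_n:n\in\N\}$ of test sets to a maximal chain via Lemma~\ref{lem:chain_extension}, then observe that its intersection lies inside $\bigcap_n B_n=\emptyset$ by Lemma~\ref{lem:Bn_intersection_empty}, and $\Dp\neq\emptyset$ keeps $\emptyset$ out of $\F$. Your explicit induction showing that any two $B_n$ are comparable is a small detail the paper leaves implicit.
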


\begin{proof}
By Lemma~\ref{lem:Bn_chain} and Proposition~\ref{prop:Bn_is_testset}, the family
\[
\mathcal C_0:=\{B_n:n\in\N\}
\]
is a descending chain in $(\F,\subseteq)$.

By Lemma~\ref{lem:chain_extension}, there exists a maximal chain
\[
\mathcal C\subseteq\F
\]
such that
\[
\mathcal C_0\subseteq\mathcal C.
\]
Since $\mathcal C_0\subseteq\mathcal C$, we have
\[
\bigcap_{A\in\mathcal C}A \subseteq \bigcap_{n\in\N}B_n.
\]
By Lemma~\ref{lem:Bn_intersection_empty},
\[
\bigcap_{n\in\N}B_n=\emptyset.
\]
Hence
\[
\bigcap_{A\in\mathcal C}A=\emptyset.
\]
Because $\Dp\neq\emptyset$, the empty set is not a test set.
Therefore
\[
\bigcap_{A\in\mathcal C}A\notin\F.
\]
Thus $\mathcal C$ is a bad maximal chain.
\end{proof}

\section{Constructing a minimal test set}\label{sec:minimal}

Throughout this section we work under the standing framework of Section~\ref{sec:intro} at the fixed point $P$:
$(X,P)$ is Fr\'echet--Urysohn at $P$, $X$ is $T_1$ at $P$, $P$ is non-isolated, and $\Dp\neq\emptyset$.

The goal of this section is to construct a minimal test set.
As a formal consequence, we will also obtain a good maximal chain.
A direct construction at the level of sequences is difficult, so we first pass from injective convergent sequences to their ranges in $X\setminus\{P\}$.
This reduces the problem to a set-theoretic one, where the minimality question can be handled using almost disjointness.

\subsection{Recasting the problem in terms of ranges}

Because $X$ is $T_1$ at $P$, Lemma~\ref{lem:kernel_T1} gives $K_P=\{P\}$.
Thus the relevant region outside the limit point is simply $X\setminus\{P\}$.
We now isolate the subsets of $X\setminus\{P\}$ that behave like ranges of injective sequences converging to $P$.

\begin{definition}[The family $\mathcal I_P$]\label{def:IP_T1}
Define $\mathcal I_P$ to be the family of sets $M\subseteq X\setminus\{P\}$ such that:
\begin{enumerate}[label=(\roman*)]
\item $M$ is countably infinite;
\item for every neighborhood $U\in\mathcal N(P)$, the set $M\setminus U$ is finite.
\end{enumerate}
\end{definition}

The point of introducing $\mathcal I_P$ is that injective convergent sequences carry more information than we need.
For the argument below, what matters is not the order in which points appear, but only which points appear and whether they still accumulate at $P$.
The family $\mathcal I_P$ records exactly that information.

\begin{example}[$\mathcal I_P$ on $\R$]\label{ex:IP_real_T1}
Let $X=\R$ with the usual topology and $P=0$.
Then
\[
M=\{1/n:n\in\N\}\in\mathcal I_0.
\]
Indeed, $M$ is countably infinite, and for every neighborhood $U$ of $0$, all but finitely many points of $M$ lie in $U$.

This membership is a property of the set $M$, not of an arbitrary enumeration of it.
For instance, define a sequence $S=(S_k)$ by
\[
S_{2k-1}=1,
\qquad
S_{2k}=\frac{1}{k}
\qquad (k\in\N).
\]
Then $\Range(S)=M\in\mathcal I_0$, but $S$ does not converge to $0$, since the value $1$ occurs infinitely often.
Thus $\mathcal I_P$ records which points appear and how the set accumulates at $P$; it does not assert that every enumeration of the set converges to $P$.
\end{example}

\begin{lemma}[$\mathcal I_P$ is nonempty]\label{lem:IP_nonempty}
Assume $X$ is $T_1$ at $P$, $P$ is non-isolated, and $(X,P)$ is Fr\'echet--Urysohn at $P$.
Then $\mathcal I_P\neq\emptyset$.
\end{lemma}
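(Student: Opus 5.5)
By Definition~\ref{def:nonisolated}, $P\in\cl{X\setminus\{P\}}$. Applying the Fr\'echet--Urysohn hypothesis (Definition~\ref{def:FU_at_P}) with $A:=X\setminus\{P\}$ gives a sequence $(a_n)_{n\in\N}\subseteq X\setminus\{P\}$ with $a_n\to P$. The candidate is $M:=\{a_n:n\in\N\}$. It remains to check the two conditions of Definition~\ref{def:IP_T1}.

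Condition (ii) follows from convergence. Given $U\in\mathcal N(P)$, there is $N$ with $a_n\in U$ for all $n\ge N$. Hence
\[
M\setminus U\subseteq\{a_1,\dots,a_{N-1}\},
\]
which is finite. Also $M\subseteq X\setminus\{P\}$ by construction, and $M$ is countable because it is the range of a sequence.

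The main step is showing $M$ is infinite; this is where the $T_1$ hypothesis enters. Suppose instead that $M$ is finite, say $M=\{x_1,\dots,x_r\}$ with every $x_i\neq P$. By Lemma~\ref{lem:kernel_T1} (equivalently, Remark~\ref{rem:kernel_consequence}), for each $i$ there is $U_i\in\mathcal N(P)$ with $x_i\notin U_i$. Then
\[
U:=U_1\cap\cdots\cap U_r
\]
is an open neighborhood of $P$ disjoint from $M$, using that $M\neq\emptyset$ so $r\ge1$ and the intersection is finite. But $a_n\to P$ forces $a_n\in U$ for large $n$, and $a_n\in M$, a contradiction. Thus $M$ is countably infinite, so $M\in\mathcal I_P$ and $\mathcal I_P\neq\emptyset$.

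Note that we never need the sequence to be injective; working with its range avoids passing to a subsequence.
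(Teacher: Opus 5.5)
Your proof is correct and follows the paper's argument. You apply the Fr\'echet--Urysohn property to $X\setminus\{P\}$, take the range $M$, get condition (ii) from convergence, and use $T_1$ at $P$ to rule out $M$ being finite. The only difference is cosmetic: the paper notes that a finite $M$ would force a single value $x\neq P$ to repeat infinitely often and excludes it with one neighborhood, whereas you intersect finitely many neighborhoods to avoid all of $M$ at once (the device the paper itself uses later in Lemma~\ref{lem:Amin_is_testset}).
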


\begin{proof}
By non-isolation, $P\in\cl{X\setminus\{P\}}$.
By the Fr\'echet--Urysohn property at $P$, choose a sequence $(x_n)\subseteq X\setminus\{P\}$ with $x_n\to P$.
Let
\[
M:=\{x_n:n\in\N\}\subseteq X\setminus\{P\}.
\]

We first show that $M$ is infinite.
If $M$ were finite, then some value $x\neq P$ would occur infinitely often in the sequence $(x_n)$.
Since $X$ is $T_1$ at $P$, there exists an open neighborhood $U\in\mathcal N(P)$ with $x\notin U$.
But $x_n\to P$ implies $x_n\in U$ eventually, a contradiction.
Thus $M$ is infinite; being the image of $\N$, it is countably infinite.

Let $U\in\mathcal N(P)$.
Since $x_n\to P$, there exists $N$ such that $x_n\in U$ for all $n\ge N$.
Hence every element of $M\setminus U$ appears among $\{x_1,\dots,x_{N-1}\}$, so $M\setminus U$ is finite.
Therefore $M\in\mathcal I_P$.
\end{proof}

The next lemma shows that every member of $\mathcal I_P$ really does come from an injective convergent sequence.
Thus $\mathcal I_P$ is not merely analogous to the collection of ranges of such sequences; it is exactly the right set-theoretic replacement.

\begin{lemma}[Enumeration lemma]\label{lem:enumerate_IP}
Assume $\mathcal I_P\neq\emptyset$.
For every $M\in\mathcal I_P$ there exists an injective sequence $T^M\in\Sp$ such that
\[
\Range(T^M)=M.
\]
\end{lemma}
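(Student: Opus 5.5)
Since $M$ is countably infinite, the plan is to fix a bijection $\varphi:\N\to M$ and set $T^M_n:=\varphi(n)$. This sequence is injective because $\varphi$ is injective, and $\Range(T^M)=M$ because $\varphi$ is surjective. The only thing left to check is convergence, namely that $T^M\in\Sp$. Here the point to keep in mind is Example~\ref{ex:IP_real_T1}: an arbitrary enumeration of a set in $\mathcal I_P$ may fail to converge, but an enumeration \emph{without repetitions} cannot.

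For convergence, I would fix $U\in\mathcal N(P)$. By condition (ii) of Definition~\ref{def:IP_T1}, the set $F:=M\setminus U$ is finite. Since $\varphi$ is a bijection, the preimage $\varphi^{-1}(F)$ is a finite subset of $\N$. I would then choose $N$ larger than every element of $\varphi^{-1}(F)$, taking $N=1$ if $F$ is empty. For every $n\ge N$ we have $n\notin\varphi^{-1}(F)$, so $T^M_n\in M\setminus F\subseteq U$. This holds for every open neighborhood $U$ of $P$, so $T^M_n\to P$ and $T^M\in\Sp$.

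I do not expect any real obstacle. The one step needing care is the finiteness of $\varphi^{-1}(F)$, and this is exactly where injectivity enters: each point of $F$ is hit by $\varphi$ only finitely often (in fact once). The hypothesis $\mathcal I_P\neq\emptyset$ plays no role in the argument; it only guarantees that the statement is not vacuous, as supplied by Lemma~\ref{lem:IP_nonempty}.
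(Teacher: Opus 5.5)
Your proposal is correct and is essentially the paper's proof: enumerate $M$ by a bijection $\N\to M$, then use the finiteness of $M\setminus U$ together with injectivity to conclude that the sequence lies in $U$ from some index on. Your observation that the hypothesis $\mathcal I_P\neq\emptyset$ plays no role in the argument is also accurate.
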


\begin{proof}
Fix $M\in\mathcal I_P$.
Choose a bijection $e:\N\to M$ and set $m_n:=e(n)$.
Then $(m_n)$ is injective and has range $M$.

Let $U\ni P$ be a neighborhood.
Since $M\in\mathcal I_P$, the set $M\setminus U$ is finite.
Because $(m_n)$ enumerates each element of $M$ exactly once, it hits $M\setminus U$ only finitely many times.
Hence there exists $N$ such that $m_n\in U$ for all $n\ge N$.
Thus $(m_n)\to P$, so $T^M:=(m_n)\in\Sp$ and $\Range(T^M)=M$.
\end{proof}

We may therefore work interchangeably with injective convergent sequences and with their ranges in $\mathcal I_P$.
From this point on, the problem of constructing a minimal test set is reduced to finding the right family inside $\mathcal I_P$.

\subsection{Building a minimal test set from a MAD family}

We need a family inside $\mathcal I_P$ with two properties: maximality and almost disjointness.
Maximality will be used to prove the test-set property, and almost disjointness will be used to prove minimality.
We therefore choose a maximal almost disjoint family.
Almost disjoint families and their role in topology are surveyed by Hru\v{s}\'{a}k~\cite{Hru14}.

\begin{definition}[Almost disjointness $\perp_P$]\label{def:perpP}
For $A,B\subseteq X\setminus\{P\}$, we say that $A$ and $B$ are \emph{almost disjoint} if
\[
|A\cap B|<\infty.
\]
In this case, we write $A\perp_P B$.
\end{definition}

By non-redundancy, we mean that no member of the candidate test set can be removed while preserving the test-set property.
Almost disjointness will be used to prove this non-redundancy.
When one sequence is removed, we will choose a discontinuous function supported on its range.
Since every other range meets that range only finitely often, the remaining sequences will eventually fail to detect that function.
Thus each sequence becomes indispensable, and this is what will force minimality.

\begin{definition}[Maximal almost disjoint family]\label{def:MAD}
A family $\mathcal M\subseteq\mathcal I_P$ is called a \emph{maximal almost disjoint family}, or simply a \emph{MAD family}, if
\begin{enumerate}[label=(\roman*)]
\item for $M_1\neq M_2$ in $\mathcal M$, one has $M_1\perp_P M_2$;
\item for every $M\in\mathcal I_P$, there exists $N\in\mathcal M$ such that
\[
|M\cap N|=\infty.
\]
\end{enumerate}
\end{definition}

Condition (i) says that $\mathcal M$ is almost disjoint.
Condition (ii) is the maximality condition: no member of $\mathcal I_P$ can be added to $\mathcal M$ while preserving pairwise almost disjointness.
Indeed, if a witness range $M\in\mathcal I_P$ met every member of $\mathcal M$ only finitely often, then $M$ could be added to $\mathcal M$ without destroying almost disjointness.
This would contradict maximality.
Hence every witness range $M\in\mathcal I_P$ must meet some member of $\mathcal M$ infinitely often, which means that its witness role can be taken over by a member of the consequent test set.
This is exactly what will yield the test-set property.

\begin{lemma}\label{lem:MAD_exists}
Assume $\mathcal I_P\neq\emptyset$.
Then there exists a MAD family $\mathcal M\subseteq\mathcal I_P$.
\end{lemma}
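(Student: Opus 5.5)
The plan is a direct application of Zorn's lemma, in the same style as the proof of Lemma~\ref{lem:chain_extension}. Consider the collection
\[
\mathfrak A:=\{\mathcal A\subseteq\mathcal I_P:\ M_1\perp_P M_2 \text{ for all distinct } M_1,M_2\in\mathcal A\},
\]
partially ordered by inclusion. Because $\mathcal I_P\neq\emptyset$, I would fix some $M_0\in\mathcal I_P$. The singleton $\{M_0\}$ belongs to $\mathfrak A$, since the pairwise condition is vacuous. Hence $\mathfrak A\neq\emptyset$, and this is the only place the hypothesis is used. It also ensures that the family produced below is nonempty, which matters for the later test-set argument.

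The chain step runs as follows. Given a chain $\{\mathcal A_i\}_{i\in I}$ in $\mathfrak A$, take $\mathcal A:=\bigcup_i\mathcal A_i$; if $I$ is empty, take $\{M_0\}$ instead. For distinct $M_1,M_2\in\mathcal A$, say $M_1\in\mathcal A_i$ and $M_2\in\mathcal A_j$, comparability of $\mathcal A_i$ and $\mathcal A_j$ puts both sets in a single member of the chain. That member is almost disjoint, so $M_1\perp_P M_2$. Thus $\mathcal A\in\mathfrak A$ is an upper bound, and Zorn's lemma yields a maximal element $\mathcal M\in\mathfrak A$.

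Next, check Definition~\ref{def:MAD}. Condition (i) holds because $\mathcal M\in\mathfrak A$. For condition (ii), suppose some $M\in\mathcal I_P$ had $|M\cap N|<\infty$ for every $N\in\mathcal M$. The key point, and the only step needing care, is that $M\notin\mathcal M$. Indeed, $M$ is infinite, so $|M\cap M|=\infty$, and if $M$ were in $\mathcal M$ it would itself serve as the required $N$.

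Given $M\notin\mathcal M$, the family $\mathcal M\cup\{M\}$ is still pairwise almost disjoint, so it lies in $\mathfrak A$. It strictly contains $\mathcal M$, which contradicts maximality. Therefore some $N\in\mathcal M$ satisfies $|M\cap N|=\infty$, and $\mathcal M$ is a MAD family. I do not expect a genuine obstacle; the only subtlety is the observation $M\notin\mathcal M$, which rests on the infinitude clause in Definition~\ref{def:IP_T1}.
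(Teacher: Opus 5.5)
Your proposal is correct and follows essentially the same route as the paper: Zorn's lemma on the pairwise almost disjoint subfamilies of $\mathcal I_P$ ordered by inclusion, with unions of chains as upper bounds and maximality giving condition (ii). Your remark that a putative counterexample $M$ cannot already lie in $\mathcal M$ (since $|M\cap M|=\infty$) spells out a step the paper leaves implicit. Seeding with $\{M_0\}$ is harmless but unnecessary, since the empty family already belongs to $\mathfrak A$.
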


\begin{proof}
We apply Zorn's lemma to the poset of pairwise almost disjoint subfamilies of $\mathcal I_P$, ordered by inclusion.
The key point is that the union of any chain of such families is again pairwise almost disjoint.

Let
\[
\mathfrak A:=\{\mathcal A\subseteq\mathcal I_P:\ \mathcal A\text{ is pairwise almost disjoint}\}
\]
be partially ordered by inclusion.
This poset is nonempty, since $\emptyset\in\mathfrak A$.

Let $\{\mathcal A_i\}_{i\in I}$ be a chain in $\mathfrak A$, and set
\[
\mathcal A:=\bigcup_{i\in I}\mathcal A_i.
\]
If $M\neq N$ are in $\mathcal A$, then $M\in\mathcal A_i$ and $N\in\mathcal A_j$ for some $i,j$.
Since the $\mathcal A_i$ form a chain under inclusion, one is contained in the other, so $M$ and $N$ lie in a common pairwise almost disjoint family.
Hence $|M\cap N|<\infty$, and therefore $\mathcal A\in\mathfrak A$.
By Zorn's lemma, $\mathfrak A$ has a maximal element $\mathcal M$.

Let $S\in\mathcal I_P$.
If $|S\cap M|<\infty$ for all $M\in\mathcal M$, then $\mathcal M\cup\{S\}$ is still pairwise almost disjoint, contradicting maximality.
Hence some $M\in\mathcal M$ satisfies $|S\cap M|=\infty$.
\end{proof}

This step is nonconstructive in general: the family $\mathcal M$ is obtained by Zorn's lemma, so the resulting minimal test set is not usually given explicitly.
The sequential fan in Section~\ref{subsec:sequential_fan} will provide a concrete case where the MAD family, and hence $A_{\min}$, can be written down.

We now pass back from sets to sequences.
The candidate minimal test set is obtained by choosing one injective convergent sequence for each member of the MAD family.

\begin{definition}[$A_{\min}$]\label{def:Amin_T1}
Fix a MAD family $\mathcal M\subseteq\mathcal I_P$.
For each $M\in\mathcal M$ choose an injective sequence $T^M\in\Sp$ with $\Range(T^M)=M$.
Define
\[
A_{\min}:=\{T^M:M\in\mathcal M\}\subseteq\Sp.
\]
\end{definition}

At this point the construction has produced a candidate family.
The next step is to check that it still detects every discontinuity, and the one after that is to check that none of its members is redundant.

\begin{lemma}\label{lem:Amin_is_testset}
Assume the standing framework of Section~\ref{sec:minimal}.
Then $A_{\min}$ is a test set, hence $A_{\min}\in\F$.
\end{lemma}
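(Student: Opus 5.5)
Fix $f\in\Dp$; the goal is a member $T^N\in A_{\min}$ that lies in $D(f)$. First I turn the discontinuity of $f$ into an element of $\mathcal I_P$, then use maximality of $\mathcal M$ to find $N$, and finally pass from $N$ back to a witnessing sequence via a subsequence argument.

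\textbf{Step 1: a witness range in $\mathcal I_P$.} As in the proof of Lemma~\ref{lem:seq_char_continuity}, choose $\e>0$ such that
\[
A:=\{x\in X:\ |f(x)-f(P)|\ge\e\}
\]
has $P\in\cl A$. Note $P\notin A$, so $A\subseteq X\setminus\{P\}$. By the Fr\'echet--Urysohn property at $P$ there is a sequence $(x_n)\subseteq A$ with $x_n\to P$. The argument of Lemma~\ref{lem:IP_nonempty} then applies verbatim. Using $T_1$ at $P$, no value repeats infinitely often, so the range $S:=\{x_n:n\in\N\}$ is countably infinite. Since $M\setminus U$ is finite for each $U\in\mathcal N(P)$, we get $S\in\mathcal I_P$ with $S\subseteq A$.

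\textbf{Step 2: maximality.} By Definition~\ref{def:MAD}(ii) there is $N\in\mathcal M$ with $|S\cap N|=\infty$. In particular, $N$ contains infinitely many points of $A$.

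\textbf{Step 3: $T^N\in D(f)$.} The sequence $T^N$ is injective with range $N$. Since $N\cap A\supseteq N\cap S$ is infinite, infinitely many indices $k$ satisfy $T^N_k\in A$, i.e.\ $|f(T^N_k)-f(P)|\ge\e$. These indices are distinct precisely because $T^N$ is injective. Hence $f(T^N_k)\not\to f(P)$, and $T^N\in D(f)\cap A_{\min}$. Since $f$ was arbitrary, $A_{\min}\in\F$.

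The main obstacle is Step 3: infinitely many \emph{points} of $N$ must correspond to infinitely many \emph{indices} of the sequence. Injectivity of $T^N$, built into Definition~\ref{def:Amin_T1} via Lemma~\ref{lem:enumerate_IP}, handles this directly. A secondary point is making sure $S$ is infinite, so that it actually lies in $\mathcal I_P$; this is where the $T_1$ hypothesis at $P$ enters.
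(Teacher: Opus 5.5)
Your proof is correct and follows the paper's argument: you build a member of $\mathcal I_P$ on which $|f-f(P)|\ge\e$, use maximality of $\mathcal M$ to get $N$ meeting it in an infinite set, and conclude that $T^N$ lands in that set at infinitely many indices. The only difference is that you get the witness range by applying the Fr\'echet--Urysohn property directly to $A=\{x:|f(x)-f(P)|\ge\e\}$, whereas the paper starts from some $S\in D(f)$ and passes to a subsequence inside that set; this does not change the argument (and in Step 1 you mean $S\setminus U$, not $M\setminus U$).
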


\begin{proof}
Let $f\in\Dp$.
Choose a witnessing sequence
\[
S=(S_n)\in D(f).
\]
Since $S\in D(f)$, there exists $\e>0$ such that the set
\[
I:=\{n\in\N:\ |f(S_n)-f(P)|\ge\e\}
\]
is infinite.
Choose strictly increasing indices $(n_k)_{k\in\N}\subseteq I$ and set
\[
R_k:=S_{n_k}.
\]
Let
\[
R:=\{R_k:k\in\N\}.
\]
We claim that $R\in\mathcal I_P$.

Since $R$ is the image of $\N$, it is at most countable.
If $R$ were finite, then $(R_k)$ would take values in a finite set $F$.
For each $x\in F$, choose an open neighborhood $U_x\in\mathcal N(P)$ with $x\notin U_x$.
Then
\[
U:=\bigcap_{x\in F}U_x
\]
is a neighborhood of $P$ disjoint from $F$.
But $(R_k)\to P$, so eventually $R_k\in U$, a contradiction.
Hence $R$ is infinite and therefore countably infinite.

Let $U\in\mathcal N(P)$.
Since $(R_k)\to P$, all but finitely many $R_k$ lie in $U$, so $R\setminus U$ is finite.
Thus $R\in\mathcal I_P$.

By Definition~\ref{def:MAD}(ii), choose $M\in\mathcal M$ with $|R\cap M|=\infty$.
Let $T^M\in A_{\min}$ enumerate $M$ injectively.
Since $R\cap M$ is infinite and $T^M$ visits each point of $M$ exactly once, the sequence $T^M$ passes through points of $R$ infinitely many times.
For those terms we have
\[
|f(T^M_j)-f(P)|\ge \e.
\]
Hence $f(T^M_n)\not\to f(P)$, so $T^M\in D(f)$.
Thus $A_{\min}\cap D(f)\neq\emptyset$.

Since $f\in\Dp$ was arbitrary, $A_{\min}$ is a test set.
Therefore $A_{\min}\in\F$.
\end{proof}

\begin{lemma}\label{lem:Amin_minimal}
Assume the standing framework of Section~\ref{sec:minimal}.
Then $A_{\min}$ is a minimal test set.
\end{lemma}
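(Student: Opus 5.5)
We already know from Lemma~\ref{lem:Amin_is_testset} that $A_{\min}\in\F$. It remains to show that no proper subfamily is a test set. So let $B\in\F$ with $B\subseteq A_{\min}$, and suppose toward a contradiction that $B\neq A_{\min}$. Then some $M_0\in\mathcal M$ has $T^{M_0}\in A_{\min}\setminus B$. The plan is to exhibit a function $f\in\Dp$ whose discontinuity is seen by $T^{M_0}$ but by no other $T^M$. This gives $B\cap D(f)=\emptyset$, contradicting $B\in\F$.

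The natural choice is the indicator of the range $M_0$:
\[
f(x)=\begin{cases}1,& x\in M_0,\\ 0,& x\notin M_0.\end{cases}
\]
Since $M_0\subseteq X\setminus\{P\}$, we have $f(P)=0$. First we check that $f\in\Dp$. The sequence $T^{M_0}$ converges to $P$ and satisfies $f(T^{M_0}_n)=1$ for every $n$, so $f(T^{M_0}_n)\not\to f(P)$. Hence $D(f)\neq\emptyset$, and Lemma~\ref{lem:seq_char_continuity} gives $f\in\Dp$. (Directly, too, continuity at $P$ would be violated.)

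Next take any $T^M\in B$. Then $M\neq M_0$, so Definition~\ref{def:MAD}(i) makes $M\cap M_0$ finite. Because $T^M$ is injective with range $M$, it takes values in $M\cap M_0$ for only finitely many indices $n$. Thus $f(T^M_n)=0=f(P)$ for all large $n$, so $T^M\notin D(f)$. It follows that $B\cap D(f)=\emptyset$, the desired contradiction, and therefore $B=A_{\min}$.

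The only delicate step is this last one, and it is where injectivity of the chosen enumerations matters. Without injectivity, a sequence could repeat a point of the finite set $M\cap M_0$ infinitely often and still witness $f$. Such a sequence could not actually converge to $P$ by the $T_1$ remark (Remark~\ref{rem:kernel_consequence}), but injectivity makes the finiteness immediate. I would also note in passing that distinct $M\neq M'$ give distinct sequences $T^M\neq T^{M'}$, since their ranges differ, so removing $T^{M_0}$ really does remove only the index $M_0$.
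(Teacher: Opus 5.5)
Your proof is correct and essentially identical to the paper's: both use the indicator of the removed range $M_0$, witnessed by $T^{M_0}$. Both then use almost disjointness together with injectivity of the enumerations to show that every remaining $T^M$ is eventually $0$ under this function. Your remark that $M\mapsto T^M$ is injective makes explicit a point the paper uses tacitly when it passes from $T^M\in A'$ to $M\neq M_0$.
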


\begin{proof}
To prove minimality, remove one element of $A_{\min}$ and construct a discontinuous function that is detected by the removed sequence but by no remaining one.

Let $A'\subsetneq A_{\min}$.
Then $T^{M_0}\notin A'$ for some $M_0\in\mathcal M$.
Define $h:X\to\R$ by
\[
h(x)=
\begin{cases}
1, & x\in M_0,\\
0, & x\notin M_0.
\end{cases}
\]
Since $P\notin M_0$, we have $h(P)=0$.

Every term of $T^{M_0}$ lies in $M_0$, so
\[
h(T^{M_0}_n)\equiv 1\not\to 0=h(P).
\]
Thus $T^{M_0}\in D(h)$, hence $h\in\Dp$.

We claim that $A'\cap D(h)=\emptyset$.
Let $T^M\in A'$ with $M\neq M_0$.
By almost disjointness, $M\cap M_0$ is finite.
Since $T^M$ enumerates $M$ injectively, it visits the finite set $M\cap M_0$ only finitely many times.
Hence for all sufficiently large $n$ we have $T^M_n\notin M_0$, so
\[
h(T^M_n)=0
\]
eventually, and therefore $h(T^M_n)\to 0=h(P)$.
Thus $T^M\notin D(h)$. $A'\cap D(h)=\emptyset$.

Thus $A'$ fails to meet the witness family of the discontinuous function $h$, so $A'$ is not a test set.

We have shown that every proper subset of $A_{\min}$ fails to be a test set.
Therefore $A_{\min}$ is minimal in $(\F,\subseteq)$.
\end{proof}

The first question of the paper is therefore settled:
the poset $(\F,\subseteq)$ contains a minimal element.
The good maximal chain now follows directly from the minimal test set.

\begin{corollary}\label{cor:good_chain_exists_T1}
Assume the standing framework of Section~\ref{sec:minimal}.
Then there exists a good maximal chain in $(\F,\subseteq)$.
\end{corollary}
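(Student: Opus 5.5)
The plan is to combine the minimal test set already built with the two general lemmas of Section~\ref{sec:prelim}. First, I fix a MAD family $\mathcal M\subseteq\mathcal I_P$; it exists by Lemmas~\ref{lem:IP_nonempty} and \ref{lem:MAD_exists}, since the standing framework guarantees $\mathcal I_P\neq\emptyset$. I then form $A_{\min}$ as in Definition~\ref{def:Amin_T1}. By Lemmas~\ref{lem:Amin_is_testset} and \ref{lem:Amin_minimal}, $A_{\min}\in\F$ and $A_{\min}$ is a minimal element of $(\F,\subseteq)$.

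Next, I consider the one-element family $\mathcal C_0:=\{A_{\min}\}$, which is trivially a chain in $(\F,\subseteq)$. Applying the Hausdorff maximal principle (Lemma~\ref{lem:chain_extension}) to the poset $(\F,\subseteq)$, I obtain a maximal chain $\mathcal C\subseteq\F$ with $A_{\min}\in\mathcal C$.

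Finally, since $\mathcal C$ is a chain containing the minimal element $A_{\min}$, Lemma~\ref{lem:min_chain_intersection} gives $\bigcap\mathcal C=A_{\min}$. Because $A_{\min}\in\F$, the chain $\mathcal C$ is good in the sense of Definition~\ref{def:goodbad}.

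No step here is a real obstacle; all the substantive work sits in Lemmas~\ref{lem:Amin_is_testset} and \ref{lem:Amin_minimal}. The one point to check is that Lemma~\ref{lem:min_chain_intersection} holds for an arbitrary chain rather than only a countable one. Its proof uses only comparability and minimality, so it applies directly to the maximal chain produced by Zorn's lemma.
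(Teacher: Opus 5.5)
Your proposal is correct and matches the paper's proof: obtain the minimal test set $A_{\min}$ from Lemmas~\ref{lem:Amin_is_testset} and \ref{lem:Amin_minimal}, extend the chain $\{A_{\min}\}$ to a maximal chain $\mathcal C$ via Lemma~\ref{lem:chain_extension}, and conclude $\bigcap\mathcal C=A_{\min}\in\F$ by Lemma~\ref{lem:min_chain_intersection}. Citing Lemmas~\ref{lem:IP_nonempty} and \ref{lem:MAD_exists} for the existence of $\mathcal M$ only makes explicit a step the paper leaves implicit in Definition~\ref{def:Amin_T1}.
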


\begin{proof}
By Lemmas~\ref{lem:Amin_is_testset} and \ref{lem:Amin_minimal}, the set $A_{\min}$ is a minimal element of $\F$.
By Lemma~\ref{lem:chain_extension}, extend the chain $\{A_{\min}\}$ to a maximal chain $\mathcal C\subseteq\F$.
Then $A_{\min}\in\mathcal C$, so Lemma~\ref{lem:min_chain_intersection} gives
\[
\bigcap_{A\in\mathcal C}A=A_{\min}\in\F.
\]
Thus $\mathcal C$ is good.
\end{proof}

Together with Theorem~\ref{thm:bad_chain_exists}, Corollary~\ref{cor:good_chain_exists_T1} proves the two chain behaviors asserted in Theorem~\ref{thm:chains_exist_intro}.

\subsection{The sequential fan as a concrete model of cardinal efficiency}\label{subsec:sequential_fan}

After the general existence and chain constructions, it is useful to examine a concrete space in which the abstract objects can be computed explicitly.
The point of the following example is not merely to illustrate the constructions above, but to show that the method developed in Section~\ref{sec:minimal} can also yield cardinal efficiency.
Although a minimal test set need not have smaller cardinality than the full family $\Sp$, the sequential fan provides a natural space in which the resulting minimal test set is strictly smaller than $\Sp$.

We use the standard countable sequential fan.
It is obtained by taking the topological sum of countably many convergent sequences and identifying all of their limit points to a single point; see Franklin--Smith Thomas~\cite{FranklinSmithThomas1977}.
We denote the resulting quotient space by $S_\omega$.

Concretely, we realize $S_\omega$ as
\[
S_\omega:=\{P\}\cup\{x_{n,m}:n,m\in\N\}.
\]
For each $n\in\N$, write
\[
B_n:=\{x_{n,m}:m\in\N\}
\]
for the $n$th spoke.
Each point $x_{n,m}$ is isolated, and the distinguished point $P$ is the common limit point obtained by identifying the limit points of the countably many convergent sequences.
A neighborhood base at $P$ is given by
\[
U_f:=\{P\}\cup\{x_{n,m}:m\ge f(n)\},
\]
where $f:\N\to\N$ is arbitrary.
Thus a neighborhood of $P$ contains, in each spoke $B_n$, a tail of the sequence
\[
x_{n,1},x_{n,2},x_{n,3},\dots .
\]
Equivalently, for each fixed $n$, the canonical sequence $(x_{n,m})_{m\in\N}$ converges to $P$ in $S_\omega$.

\begin{figure}[ht!]
\centering
\begin{tikzpicture}[x=1cm,y=1cm,>=Latex]
  \coordinate (P) at (0,0);
  \fill[red] (P) circle (2.2pt);
  \node[below left] at (P) {$P$};

  \foreach \ang/\lab/\pos in {155/$B_1$/above, 95/$B_2$/right, 30/$B_3$/right, -35/$B_4$/right, -95/$B_5$/right} {
    \draw[thick] ($(P)+(\ang:1.12)$) -- ($(P)+(\ang:4.55)$);
    \draw[red, very thick] (P) -- ($(P)+(\ang:1.12)$);
    \node[\pos] at ($(P)+(\ang:4.55)$) {\lab};
  }

  \foreach \r/\s in {3.95/2.3,3.25/2.0,2.70/1.8,2.25/1.6,1.95/1.45,1.72/1.35,1.53/1.25,1.38/1.18,1.24/1.10} {
    \fill ($(P)+(155:\r)$) circle (\s pt);
  }
  \node[above] at ($(P)+(155:3.95)$) {$x_{1,1}$};
  \node[above] at ($(P)+(155:3.25)$) {$x_{1,2}$};
  \node[above] at ($(P)+(155:2.65)$) {$x_{1,3}$};

  \foreach \r/\s in {3.95/2.3,3.25/2.0,2.70/1.8,2.25/1.6,1.95/1.45,1.72/1.35,1.53/1.25,1.38/1.18,1.24/1.10} {
    \fill ($(P)+(95:\r)$) circle (\s pt);
  }
  \node[right] at ($(P)+(95:3.95)$) {$x_{2,1}$};
  \node[right] at ($(P)+(95:3.25)$) {$x_{2,2}$};
  \node[right] at ($(P)+(95:2.70)$) {$x_{2,3}$};

  \foreach \r/\s in {3.95/3.2,3.25/2.8,2.70/2.3,2.25/1.8,1.95/1.55,1.72/1.40,1.53/1.28,1.38/1.18,1.24/1.10} {
    \fill ($(P)+(30:\r)$) circle (\s pt);
  }
  \node[right] at ($(P)+(29:3.95)$) {$x_{3,1}$};
  \node[right] at ($(P)+(29:3.25)$) {$x_{3,2}$};
  \node[right] at ($(P)+(29:2.70)$) {$x_{3,3}$};

  \foreach \r/\s in {3.95/3.2,3.25/2.8,2.70/2.3,2.25/1.8,1.95/1.55,1.72/1.40,1.53/1.28,1.38/1.18,1.24/1.10} {
    \fill ($(P)+(-35:\r)$) circle (\s pt);
  }
  \node[right] at ($(P)+(-35:3.95)$) {$x_{4,1}$};
  \node[right] at ($(P)+(-35:3.25)$) {$x_{4,2}$};
  \node[right] at ($(P)+(-35:2.70)$) {$x_{4,3}$};

  \foreach \r/\s in {3.95/2.3,3.25/2.0,2.70/1.8,2.25/1.6,1.95/1.45,1.72/1.35,1.53/1.25,1.38/1.18,1.24/1.10} {
    \fill ($(P)+(-95:\r)$) circle (\s pt);
  }
  \node[right] at ($(P)+(-95:2.70)$) {$x_{5,3}$};
  \node[right] at ($(P)+(-95:3.25)$) {$x_{5,2}$};
  \node[right] at ($(P)+(-95:3.95)$) {$x_{5,1}$};

  \draw[black!55, thin] ($(P)+(-118:0.55)$) -- ($(P)+(-118:4.15)$);
  \draw[black!55, thin] ($(P)+(-142:0.55)$) -- ($(P)+(-142:3.95)$);

  \draw[red,->,thick] (1.05,2.0) -- (0.15,0.18);
  \node[red, right] at (1.08,2.0) {$U_f\cap B_n$};
  \node[right] at (2.0,-1) {$T_4=(x_{4,k})_{k\in\N}$};
\end{tikzpicture}
\caption{The countable sequential fan $S_\omega$. Each spoke is $B_n=\{x_{n,m}:m\in\N\}$ and only five labeled spokes are shown explicitly; the fan has countably many spokes. The red segment near $P$ represents the tail $U_f\cap B_n$ of a basic neighborhood $U_f$. The canonical sequence $T_n=(x_{n,k})_{k\in\N}$ along each spoke is an element of the minimal test set.}
\label{fig:sequential_fan}
\end{figure}

The space $S_\omega$ satisfies the standing framework.
Indeed, it is $T_1$ because every $x_{n,m}$ is isolated, the point $P$ is non-isolated by construction, and the characteristic function of $\{P\}$ is discontinuous at $P$.
To see that $(S_\omega,P)$ is Fr\'echet--Urysohn at $P$, let $A\subseteq S_\omega\setminus\{P\}$ with $P\in\cl{A}$.
If each $A\cap B_n$ were finite, we could define a function $f:\N\to\N$ by setting $f(n)=1$ when $A\cap B_n=\emptyset$ and
\[
f(n)>\max\{m:x_{n,m}\in A\cap B_n\}
\]
otherwise.
Then $U_f\cap A=\emptyset$, a contradiction.
Hence $A\cap B_{n_0}$ is infinite for some $n_0$, and an increasing enumeration $x_{n_0,m_k}$ of points from $A\cap B_{n_0}$ satisfies $x_{n_0,m_k}\to P$.

In this space, the family $\mathcal I_P$ can be computed explicitly.

\begin{proposition}\label{prop:sequential_fan_IP}
Let $M\subseteq S_\omega\setminus\{P\}$. Then $M\in\mathcal I_P$ if and only if $M$ is countably infinite and contained in the union of finitely many spokes.
\end{proposition}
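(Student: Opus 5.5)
We prove the two implications separately, working throughout with the neighborhood base $\{U_f\}$ at $P$ described above. Since every open neighborhood of $P$ contains some $U_f$, condition (ii) of Definition~\ref{def:IP_T1} is equivalent to asking that $M\setminus U_f$ be finite for every $f:\N\to\N$. Note also that $M\setminus U_f=\{x_{n,m}\in M: m<f(n)\}$.

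For the ``if'' direction, suppose $M$ is countably infinite and $M\subseteq B_{n_1}\cup\dots\cup B_{n_r}$. Fix $f:\N\to\N$. Then
\[
M\setminus U_f\subseteq\bigcup_{i=1}^{r}\{x_{n_i,m}: m<f(n_i)\},
\]
which is a finite union of finite sets. Hence $M\setminus U_f$ is finite for every $f$. Passing to arbitrary neighborhoods through the base gives $M\in\mathcal I_P$.

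For the ``only if'' direction, let $M\in\mathcal I_P$. Condition (i) already says that $M$ is countably infinite, so it remains to show that $M$ meets only finitely many spokes. We argue by contradiction: suppose $M\cap B_n\neq\emptyset$ for infinitely many $n$, say $n_1<n_2<\cdots$. For each $k$ choose $m_k$ with $x_{n_k,m_k}\in M$. Define $f:\N\to\N$ by $f(n_k):=m_k+1$ and $f(n):=1$ for all other $n$. Then each point $x_{n_k,m_k}$ lies outside $U_f$, because $m_k<f(n_k)$. These points are pairwise distinct, since they lie in distinct spokes. Therefore $M\setminus U_f$ is infinite, which contradicts condition (ii). Consequently $M$ lies in the union of finitely many spokes.

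The argument is short and I do not expect a real obstacle. The only point needing care is the reduction from arbitrary neighborhoods to the basic ones $U_f$. This reduction goes in the correct direction: if $U\supseteq U_f$, then $M\setminus U\subseteq M\setminus U_f$. So finiteness for every basic neighborhood gives finiteness for every neighborhood. In the converse direction, the set $U_f$ must itself be a neighborhood of $P$; it is open because each $x_{n,m}$ is isolated and $U_f$ contains a tail of every spoke, so it may be used directly in condition (ii).
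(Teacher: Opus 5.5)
Your proof is correct and follows the paper's argument essentially verbatim: the same diagonal choice $f(n_k)=m_k+1$ forces infinitely many points of $M$ outside $U_f$ in the forward direction, and the same bound $M\setminus U_f\subseteq\bigcup_i (B_{n_i}\setminus U_f)$ handles the converse. Your remarks on reducing to the base $\{U_f\}$, on $U_f$ being open, and on the chosen points being distinct are details the paper leaves implicit.
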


\begin{proof}
Assume first that $M\in\mathcal I_P$. If $M$ met infinitely many spokes, choose an infinite set $J\subseteq\N$ such that $M\cap B_n\neq\emptyset$ for each $n\in J$, and pick a point $x_{n,m_n}\in M\cap B_n$. Define a function $f:\N\to\N$ by
\[
f(n)=
\begin{cases}
m_n+1,& n\in J,\\
1,& n\notin J.
\end{cases}
\]
Then $x_{n,m_n}\in M\setminus U_f$ for every $n\in J$, so $M\setminus U_f$ is infinite, contradicting $M\in\mathcal I_P$. Hence $M$ is contained in the union of finitely many spokes.

Conversely, suppose that $M$ is countably infinite and contained in $B_{n_1}\cup\cdots\cup B_{n_k}$. Let $U_f$ be a neighborhood of $P$. For each $i$, the set $B_{n_i}\setminus U_f$ consists of the finitely many points $x_{n_i,m}$ with $m<f(n_i)$. Therefore
\[
M\setminus U_f\subseteq \bigcup_{i=1}^k (B_{n_i}\setminus U_f)
\]
is finite, so $M\in\mathcal I_P$.
\end{proof}

This description makes it easy to write down a countable MAD family in $\mathcal I_P$.

\begin{proposition}\label{prop:sequential_fan_MAD}
The family
\[
\mathcal M:=\{B_n:n\in\N\}
\]
is a MAD family in $\mathcal I_P$.
\end{proposition}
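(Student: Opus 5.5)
The proof has three parts. First we check that each spoke lies in $\mathcal I_P$. Then we check that distinct spokes are almost disjoint. The only real work is the maximality clause of Definition~\ref{def:MAD}(ii), and we will derive it from Proposition~\ref{prop:sequential_fan_IP}.

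\emph{Membership.} Each spoke $B_n=\{x_{n,m}:m\in\N\}$ is countably infinite, since the points $x_{n,m}$ are distinct. It is contained in a single spoke, namely itself. Proposition~\ref{prop:sequential_fan_IP} therefore gives $B_n\in\mathcal I_P$, so $\mathcal M\subseteq\mathcal I_P$.

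\emph{Almost disjointness.} For $n\neq n'$ the spokes $B_n$ and $B_{n'}$ are disjoint by construction, because their points carry different first indices. Hence $|B_n\cap B_{n'}|=0<\infty$, which is Definition~\ref{def:MAD}(i).

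\emph{Maximality.} Let $M\in\mathcal I_P$ be arbitrary. By Proposition~\ref{prop:sequential_fan_IP}, $M$ is infinite and $M\subseteq B_{n_1}\cup\dots\cup B_{n_k}$ for finitely many indices. We can then write
\[
M=\bigcup_{i=1}^k (M\cap B_{n_i}).
\]
A finite union of finite sets is finite, so at least one piece $M\cap B_{n_i}$ must be infinite. Taking $N:=B_{n_i}\in\mathcal M$ gives $|M\cap N|=\infty$, which is Definition~\ref{def:MAD}(ii).

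The only point that needs care is the maximality step, and it reduces entirely to the finite-spoke characterization already proved in Proposition~\ref{prop:sequential_fan_IP}. Everything else follows immediately from the definitions.
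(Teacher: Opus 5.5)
Your proof is correct and follows essentially the same approach as the paper: the spokes are pairwise disjoint, hence almost disjoint, and maximality follows from Proposition~\ref{prop:sequential_fan_IP} together with the pigeonhole principle. Your explicit verification that each $B_n$ lies in $\mathcal I_P$ is a small addition that the paper leaves implicit.
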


\begin{proof}
The sets $B_n$ are pairwise disjoint, hence pairwise almost disjoint. Let $M\in\mathcal I_P$. By Proposition~\ref{prop:sequential_fan_IP}, the set $M$ is contained in the union of finitely many spokes. Since $M$ is infinite, the pigeonhole principle gives some $n\in\N$ for which $|M\cap B_n|=\infty$. Thus $M$ meets some member of $\mathcal M$ in an infinite set, proving maximality.
\end{proof}

For each $n\in\N$, let
\[
T_n(k):=x_{n,k}
\qquad (k\in\N).
\]
Then $T_n\to P$, and the family
\[
A_{\min}^{\mathrm{fan}}:=\{T_n:n\in\N\}
\]
is precisely the minimal test set associated with the MAD family $\mathcal M$ from Proposition~\ref{prop:sequential_fan_MAD}.

\begin{corollary}\label{cor:sequential_fan_minimal}
In the sequential fan $S_\omega$, the family $A_{\min}^{\mathrm{fan}}=\{T_n:n\in\N\}$ is a minimal test set.
Moreover,
\[
|A_{\min}^{\mathrm{fan}}|=\aleph_0
\qquad\text{and}\qquad
|\Sp|=2^{\aleph_0}.
\]
In particular, $|A_{\min}^{\mathrm{fan}}|<|\Sp|$.
\end{corollary}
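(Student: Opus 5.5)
The plan has three parts: minimality of $A_{\min}^{\mathrm{fan}}$, the count $|A_{\min}^{\mathrm{fan}}|=\aleph_0$, and the count $|\Sp|=2^{\aleph_0}$.

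\emph{Minimality.} First we confirm that $S_\omega$ satisfies the standing framework of Section~\ref{sec:minimal}; this was checked just before Proposition~\ref{prop:sequential_fan_IP}. By Proposition~\ref{prop:sequential_fan_MAD}, $\mathcal M=\{B_n:n\in\N\}$ is a MAD family in $\mathcal I_P$. Each $T_n$ is an injective sequence in $\Sp$ with $\Range(T_n)=B_n$. So $A_{\min}^{\mathrm{fan}}$ is exactly the set $A_{\min}$ of Definition~\ref{def:Amin_T1} built from $\mathcal M$, with the choice $T^{B_n}=T_n$. Lemmas~\ref{lem:Amin_is_testset} and \ref{lem:Amin_minimal} then show that it is a minimal test set.

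\emph{Countability.} The map $n\mapsto T_n$ is injective, since the ranges $B_n$ are pairwise disjoint. Hence $|A_{\min}^{\mathrm{fan}}|=\aleph_0$.

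\emph{Upper bound on $|\Sp|$.} The space $S_\omega$ is countable, being a countable union of countable sets together with $P$. Every element of $\Sp$ is a function $\N\to S_\omega$, so
\[
|\Sp|\le \aleph_0^{\aleph_0}=2^{\aleph_0}.
\]

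\emph{Lower bound on $|\Sp|$.} We inject $2^{\N}$ into $\Sp$. For $\sigma\in\{0,1\}^{\N}$, define
\[
S^\sigma_k:=x_{1+\sigma(k),\,k},
\]
so the $k$th term lies on spoke $B_1$ or $B_2$ according to $\sigma(k)$. To see that $S^\sigma\to P$, take a basic neighborhood $U_f$. For $k\ge\max\{f(1),f(2)\}$ we have $S^\sigma_k\in U_f$. Distinct $\sigma$ give distinct sequences, because the points $x_{1,k}$ and $x_{2,k}$ are distinct. Hence $|\Sp|\ge 2^{\aleph_0}$, and by Cantor--Schröder--Bernstein $|\Sp|=2^{\aleph_0}$.

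\emph{Conclusion.} Cantor's theorem gives $\aleph_0<2^{\aleph_0}$, so $|A_{\min}^{\mathrm{fan}}|<|\Sp|$.

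The only real care needed is in the first part: checking that $A_{\min}^{\mathrm{fan}}$ genuinely is an instance of the general construction, namely that each $T_n$ is injective, convergent, and has range exactly $B_n$. The rest is routine cardinal arithmetic. Convergence of $S^\sigma$ is the only point where the neighborhood base of $S_\omega$ is used, and it works because only finitely many spokes (here two) are involved.
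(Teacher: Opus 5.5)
Your proposal is correct and follows the paper's proof. Minimality comes from recognizing $A_{\min}^{\mathrm{fan}}$ as the instance of Definition~\ref{def:Amin_T1} for the MAD family $\{B_n\}$, the upper bound from $\Sp\subseteq S_\omega^{\N}$ with $S_\omega$ countable, and the lower bound from exhibiting continuum many convergent sequences. The differences are minor: you inject $\{0,1\}^{\N}$ by switching between spokes $B_1$ and $B_2$ at each index, whereas the paper uses subsequences $x_{1,a(k)}$ of a single spoke indexed by strictly increasing $a:\N\to\N$; and your check that $n\mapsto T_n$ is injective justifies the equality $|A_{\min}^{\mathrm{fan}}|=\aleph_0$, not just countability.
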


\begin{proof}
By Proposition~\ref{prop:sequential_fan_MAD} and the general construction of Definition~\ref{def:Amin_T1}, the family $A_{\min}^{\mathrm{fan}}$ is a minimal test set.
Since $A_{\min}^{\mathrm{fan}}=\{T_n:n\in\N\}$, it is countable.
On the other hand, $\Sp\subseteq S_\omega^\N$ and $S_\omega$ is countable, so $|\Sp|\le 2^{\aleph_0}$.
For the reverse inequality, fix the spoke $B_1$.
For each strictly increasing function $a:\N\to\N$, define a sequence $T^a$ by
\[
T^a_k=x_{1,a(k)}.
\]
Each $T^a$ belongs to $\Sp$, and distinct functions give distinct sequences.
Since there are $2^{\aleph_0}$ strictly increasing functions $\N\to\N$, we get $|\Sp|\ge 2^{\aleph_0}$.
Thus $|\Sp|=2^{\aleph_0}$.
\end{proof}

Thus the sequential fan shows that the construction of Section~\ref{sec:minimal} can realize genuine cardinal compression in a natural space:
the resulting minimal test set may be strictly smaller than the full family $\Sp$.

\subsection{A discussion for non-\texorpdfstring{$T_1$}{T1} spaces}\label{subsec:nonT1_spaces}

If $T_1$ fails at $P$, the kernel
\[
K_P=\bigcap_{U\in\mathcal N(P)}U
\]
may satisfy $K_P\supsetneq\{P\}$.
In this situation, points of $K_P$ give rise to constant sequences converging to $P$:
if $x\in K_P$, then the constant sequence
\[
c_x:=(x,x,x,\dots)
\]
satisfies $c_x\to P$, since $x\in U$ for every neighborhood $U\in\mathcal N(P)$.

These constant sequences immediately detect a certain class of discontinuities.
Indeed, if $x\in K_P\setminus\{P\}$ and $f:X\to\R$ satisfies $f(x)\neq f(P)$, then along $c_x$ we have
\[
f(c_{x,n})\equiv f(x)\neq f(P),
\]
so $f(c_{x,n})\not\to f(P)$.
Hence $c_x\in D(f)$ and $f\in\Dp$.

However, constant sequences alone need not detect all discontinuities in $\Dp$.
It may happen that $f|_{K_P}\equiv f(P)$ while $f$ is still discontinuous at $P$, with the discontinuity witnessed only by sequences that eventually lie in $X\setminus K_P$.
Accordingly, in the non-$T_1$ regime one should distinguish discontinuities already visible on $K_P$
from those arising from points in $X\setminus K_P$.

\section*{Acknowledgments}

The author would like to thank Professor Sanghoon Kwak for his guidance,
encouragement, and helpful comments on earlier versions of this paper.
The author also thanks Professor Inseok Seo for an inspiring course on set theory
and mathematical logic.


\begin{thebibliography}{99}

\bibitem[Eng89]{Eng89}
R.~Engelking,
\emph{General Topology},
2nd ed., Sigma Series in Pure Mathematics, Vol.~6,
Heldermann Verlag, Berlin, 1989.

\bibitem[Fra65]{Fra65}
S.~P.~Franklin,
Spaces in which sequences suffice,
\emph{Fundamenta Mathematicae} \textbf{57} (1965), 107--115.

\bibitem[FST77]{FranklinSmithThomas1977}
S.~P.~Franklin and B.~V.~Smith Thomas,
On the metrizability of \(k_{\omega}\)-spaces,
\emph{Pacific Journal of Mathematics} \textbf{72} (1977), no.~2, 399--402.

\bibitem[Hru14]{Hru14}
M.~Hru\v{s}\'{a}k,
Almost disjoint families and topology,
in \emph{Recent Progress in General Topology III}
(K.~P.~Hart, J.~van Mill, P.~Simon, eds.),
Atlantis Press, Paris, 2014, 601--638.

\bibitem[Jec03]{Jec03}
T.~Jech,
\emph{Set Theory: The Third Millennium Edition, revised and expanded},
Springer Monographs in Mathematics,
Springer, Berlin--Heidelberg, 2003.

\end{thebibliography}
\end{document}